\newtheorem{theorem}{Theorem}
\newtheorem{observation}{Observation}
\newtheorem{definition}{Definition}
\def \mathR {\mathbb{R}}
\def \Vor {\mbox{Vor}}
\def \Pow {\mbox{Pow}}
\def \Del {\mbox{Del}}
\def \Reg {\mbox{Reg}}
\begin{document}
\title{A Numerical Algorithm for \\ $L_2$ Semi-Discrete Optimal Transport in 3D}
\author{Bruno L\'evy}\thanks{Inria Nancy Grand-Est and LORIA, rue du Jardin Botanique, 54500 Vandoeuvre, France}
\date{04/01/2014}
\begin{abstract}
  This paper introduces a numerical algorithm to compute the $L_2$ optimal transport
map between two measures $\mu$ and $\nu$, where $\mu$ derives from a
density $\rho$ defined as a piecewise linear function (supported by a
tetrahedral mesh), and where $\nu$ is a sum of Dirac masses. 

  I first give an elementary presentation of some known results on optimal transport and then observe a relation
with another problem (optimal sampling). This relation gives simple arguments to study
the objective functions that characterize both problems. 

  I then propose a practical algorithm to compute the optimal transport map between a
piecewise linear density and a sum of Dirac masses in 3D. In this semi-discrete setting,
Aurenhammer et.al [\emph{8th Symposium on Computational Geometry conf. proc.}, ACM (1992)] 
showed that the optimal transport map is determined by the weights of a power diagram. 
The optimal weights are computed by minimizing a convex objective function with a quasi-Newton method. 
To evaluate the value and gradient of this objective function, I propose an efficient and robust 
algorithm, that computes at each iteration the intersection between a power diagram and the tetrahedral mesh 
that defines the measure $\mu$. 

  The numerical algorithm is experimented and evaluated on several datasets, with up to hundred thousands
tetrahedra and one million Dirac masses.
\end{abstract}
%
%

\subjclass{49M15, 35J96, 65D18}
\keywords{optimal transport, power diagrams, quantization noise power, Lloyd relaxation}
\maketitle

\section*{Introduction}


Optimal Transportation, initially studied by Monge \cite{Monge1784}, is 
a very general problem formulation that can be used as a model for a wide range of
applications domains. In particular, it is a natural formulation
for several fundamental questions in Computer Graphics
\cite{DBLP:journals/focm/Memoli11,DBLP:journals/cgf/Merigot11,DBLP:journals/tog/BonneelPPH11}

  This article proposes a practical algorithm to compute the optimal transport
map between two measures $\mu$ and $\nu$, where $\mu$ derives from a
density $\rho$ defined as a piecewise linear function (supported by a
tetrahedral mesh), and where $\nu$ is a sum of Dirac masses. Possible applications
comprise measuring the (approximated) Wasserstein distance between two
shapes and deforming a 3D shape onto another one (3D morphing).  \\

  I first review some known results about optimal transport in Section \ref{sect:OT}, 
  its relation with power diagrams \cite{DBLP:conf/compgeom/AurenhammerHA92,DBLP:journals/cgf/Merigot11}
  in Section \ref{sect:semidiscreteOT}
  and observe some connections with another problem (optimal sampling
  \cite{Lloyd82leastsquares,Du:1999:CVT:340312.340319}).  The
  structure of the objective function minimized by both problems is
  very similar, this allows reusing known results for both
  functions. This gives a simple argument to easily compute the
  gradient of the quantization noise power minimized by optimal
  sampling, and this gives the second order continuity of the
  objective function minimized in semi-discrete optimal transport (see Section \ref{sect:CVTandOT}). \\

  I then propose a practical algorithm to compute the optimal
  transport map between a piecewise linear density and a sum of Dirac masses
  in 3D (Section \ref{sect:numerics}). This means determining the weights of a power diagram,
  obtained as the unique minimizer of a convex function
  \cite{DBLP:conf/compgeom/AurenhammerHA92}.  Following the approach
  in \cite{DBLP:journals/cgf/Merigot11}, to optimize this function, I
  use a quasi-Newton solver combined with a multilevel
  algorithm. Adapting the approach to the 3D setting requires an
  efficient method to compute the intersection between a power diagram
  and the tetrahedral mesh that defines the density $\mu$. \\

  To compute these intersections, the algorithm presented here
  simultaneously traverses the tetrahedral mesh and the power
  diagram (Section \ref{sect:RVD}). The required geometric predicates are implemented in both
  standard floating point precision and arbitrary precision, using
  arithmetic filtering \cite{meyer:inria-00344297}, expansion
  arithmetics \cite{DBLP:conf/compgeom/Shewchuk96} and symbolic
  perturbation \cite{Edelsbrunner90simulationof}. Both predicates and
  power diagram construction algorithm are available in PCK (Predicate
  Construction Kit) part of my publically available ``geogram''
  programming
  library\footnote{\url{http://gforge.inria.fr/projects/geogram/}}. \\

  The algorithm was experimented and evaluated on several datasets 
  (Section \ref{sect:results}).

\section{Optimal Transport: an Elementary Introduction}
\label{sect:OT}

This section, inspired by \cite{OTON}, \cite{OTintro}, \cite{MAEintro} and \cite{OTuserguide},
presents an introduction to optimal transport. It stays at an elementary level that corresponds
to what I have understood and that keeps computer implementation in mind. 

\subsection{The initial formulation by Monge}

  The problem of Optimal Transport was first introduced and studied
by Monge \cite{Monge1784}. With modern notations, it can be stated as follows~:

\begin{equation*}
  (M) \ 
  \begin{array}{l}
    \mbox{given $\Omega$ a Borel set and two measures $\mu$ and $\nu$ on $\Omega$ such that 
      $\mu(\Omega) = \nu(\Omega)$,} \\
  \mbox{find } T: \Omega \rightarrow \Omega \mbox{ such that}
     \left\{
     \begin{array}{cl}
       (C1) & \nu = T\sharp\mu \\
       (C2) & \int_\Omega c(x,T(x)) d\mu \mbox{ is minimal} 
     \end{array}
     \right.
   \end{array}
\end{equation*}
where $c$ denotes a convex distance function.
In the first constraint $(C1)$, $T\sharp\mu$ denotes the pushforward
of $\mu$ by $T$, defined by $T\sharp\mu(X) = \mu(T^{-1}(X))$ for any
Borel (i.e. measurable) subset $X$ of $\Omega$. In other words, the constraint $(C1)$ 
means that $T$ should preserve the mass of any measurable subset of $\Omega$. 
The functional in $(C2)$ has a non-symmetric structure, that
makes it difficult to study the existence for problem $(M)$. 

\begin{figure}
\centerline{\includegraphics[width=100mm]{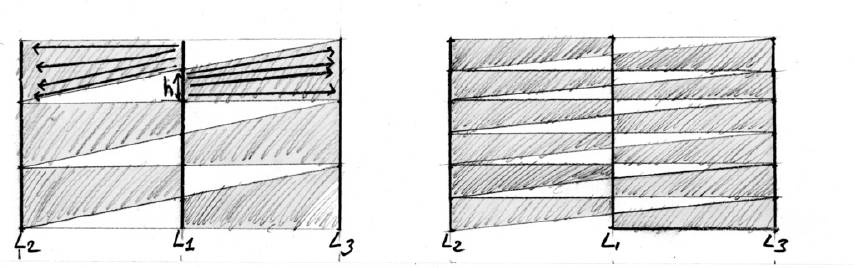}}
\caption{A classical illustration of the existence problem with Monge's 
 formulation: there is no optimal transport map
 from a segment $L_1$ to two parallel segments $L_2$ and $L_3$ (it is always possible
 to find a better one by replacing $h$ with $h/2$)} 
\label{fig:monge_pb}
\end{figure}

The non-symmetry comes from the constraint that $T$ should be a map. It makes
it possible to merge mass but not to split mass. This difficulty is
illustrated in Figure \ref{fig:monge_pb}. Suppose you want to find the
optimal transport from one vertical segment $L_1$ to two parallel
segments $L_2$ and $L_3$. It is possible to split $L_1$ into segments
of length $h$ mapped to $L_2$ and $L_3$ in alternance (Figure \ref{fig:monge_pb} left). For any length $h$, it
is always possible to find a better map, i.e. with a lower value of
the functional in $(C2)$, by splitting $L_1$ into smaller segments 
(Figure \ref{fig:monge_pb} right), therefore problem (M) does not have
a solution within the set of admissible maps. This problem occurs whenever
the source measure $\mu$ has mass concentrated on sets with
zero geometric measure (like $L_1$). 

\subsection{The relaxation of Kantorovich for Monge's problem}
To overcome this difficulty, Kantorovich proposed a relaxation of problem (M)
where mass can be both splitted and merged. The idea consists of manipulating
measures on $\Omega \times \Omega$ as follows~:

\begin{equation*}
(K) \quad \quad
   \begin{array}{l}
     \mbox{min} \left\{ \int\limits_{\Omega \times \Omega} c(x,y) d \gamma \ | \ \gamma \in \Pi(\mu,\nu) \right\} \\[5mm]
   \mbox{where } \Pi(\mu,\nu) = \{ \gamma \in {\mathcal{P}}(\Omega \times \Omega) \ | \
     (P_1)\sharp\gamma = \mu \ ; \ (P_2)\sharp\gamma = \nu \}
   \end{array}
\end{equation*}
where $(P_1)$ and $(P_2)$ denote the two projections $(x,y) \in \Omega \times \Omega \mapsto x$ and 
$(x,y) \in \Omega \times \Omega \mapsto y$ respectively.

The pushforwards of the two projections $(P_1)\sharp\gamma$ and $(P_2)\sharp\gamma$ are called the
marginals of $\gamma$. The probability measures $\gamma$ in $\Pi(\mu,\nu)$, i.e. that have $\mu$
and $\nu$ as marginals, are called \emph{transport plans}. Among the transport plans, those that
are in the form $(Id \times T)\sharp \mu$ correspond to a transport map $T$~:

\begin{observation}
  If $(Id \times T)\sharp\mu \in \pi(\mu,\nu)$, then $T$ pushes $\mu$ to $\nu$.
\end{observation}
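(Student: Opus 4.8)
The plan is to unwind the definitions of pushforward and marginal, and verify directly that the second marginal of $(Id \times T)\sharp\mu$ equals $\nu$, which is exactly the condition $(C1)$ saying that $T$ pushes $\mu$ to $\nu$. So the goal reduces to showing $(P_2)\sharp\big((Id \times T)\sharp\mu\big) = \nu$, given that $(Id \times T)\sharp\mu$ is assumed to lie in $\Pi(\mu,\nu)$.

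First I would recall that, by definition of $\Pi(\mu,\nu)$, membership $(Id \times T)\sharp\mu \in \Pi(\mu,\nu)$ already encodes both marginal constraints, namely $(P_1)\sharp\gamma = \mu$ and $(P_2)\sharp\gamma = \nu$ with $\gamma = (Id \times T)\sharp\mu$. Then I would compute the second marginal explicitly using the composition rule for pushforwards, $(f \circ g)\sharp\mu = f\sharp(g\sharp\mu)$. Applying this with $g = Id \times T$ and $f = P_2$, and noting that $P_2 \circ (Id \times T) = T$ as a map $\Omega \to \Omega$, I would obtain
\begin{equation*}
  (P_2)\sharp\big((Id \times T)\sharp\mu\big) = \big(P_2 \circ (Id \times T)\big)\sharp\mu = T\sharp\mu.
\end{equation*}
Combining this identity with the marginal condition $(P_2)\sharp\gamma = \nu$ forced by the hypothesis $\gamma \in \Pi(\mu,\nu)$ gives $T\sharp\mu = \nu$, which is precisely constraint $(C1)$: $T$ pushes $\mu$ to $\nu$.

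The argument is essentially a definition-chase, so there is no deep obstacle; the only point requiring care is verifying the map identity $P_2 \circ (Id \times T) = T$ and the associated pushforward composition law at the level of preimages of Borel sets. Concretely, for a Borel set $X \subseteq \Omega$ one checks $(P_2 \circ (Id \times T))^{-1}(X) = (Id \times T)^{-1}(\Omega \times X) = \{x : T(x) \in X\} = T^{-1}(X)$, so that evaluating the pushforward on $X$ reproduces $\mu(T^{-1}(X)) = T\sharp\mu(X)$. I would keep an eye on measurability (that $T$ is measurable so these preimages are Borel) but this is guaranteed by the standing assumptions on admissible transport maps, so the verification is routine.
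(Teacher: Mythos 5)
Your proof is correct and follows essentially the same route as the paper's: both extract the marginal condition $(P_2)\sharp\gamma = \nu$ from the hypothesis $\gamma = (Id \times T)\sharp\mu \in \Pi(\mu,\nu)$, then use the pushforward composition law and the identity $P_2 \circ (Id \times T) = T$ to conclude $T\sharp\mu = \nu$. The only difference is that you additionally verify the composition law on preimages of Borel sets, a detail the paper leaves implicit.
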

\begin{proof}
 $(Id \times T)\sharp\mu$ belongs to $\pi(\mu,\nu)$, therefore 
  $(P_2)\sharp(Id \times T)\sharp \mu = \nu$, \\ or
  $\left((P_2)\circ(Id \times T)\right)\sharp \mu = \nu$, thus
  $T\sharp\mu = \nu$
\end{proof}

With this observation, for transport plans of the form $\gamma = (Id \times T)\sharp \mu$, (K) becomes
$$
     \mbox{min} \left\{ \int\limits_{\Omega \times \Omega} c(x,y)d\left( (Id \times T)\sharp \mu \right) \right\} 
\quad = \quad
     \mbox{min} \left\{ \int\limits_\Omega c(x,T(x)) d \mu \right) 
$$

To help intuition, four examples of transport plans in 1D are depicted in Figure
\ref{fig:OTplans}. The measure $\gamma$ on $\Omega \times \Omega$ is
non-zero on subsets that contain points $(x,y)$ such that mass is
transported from $x$ to $y$. The transport plans in the first two
examples are in the form $(Id \times T)\sharp \mu$, i.e. they are
derived from a transport map\footnote{For the second one (B), the
  transport map is not defined in the center of the segment, but it is not
  a problem since there is no mass concentrated there.}. The third and
fourth ones do not admit a transport map, because they split a
Dirac mass. The optimal transport plan for the case shown in Figure
\ref{fig:monge_pb} is of the same nature. It is not in the form $(Id
\times T)\sharp \mu$ because it splits the mass concentrated in $L_1$
into $L_2$ and $L_3$.

\begin{figure}
\includegraphics[width=130mm]{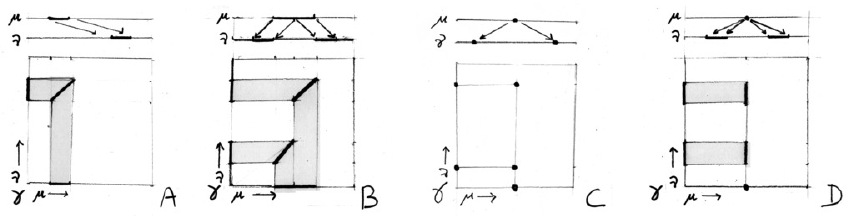}
\caption{Four examples of transport plans in 1D. A: a segment
  is translated. B: a segment is splitted into two segments. C: a
  Dirac mass is splitted into two Dirac masses. D: a Dirac mass is splitted into two segments. 
  The first two ones (A and B) are in the form $(Id \times T)\sharp \mu$ where $T$ is a transport map,
  whereas the third and fourth ones (C and D) are not, because they both split a Dirac mass.} 
\label{fig:OTplans}
\end{figure}

At this point, a standard approach to tackle the existence problem is
to find some regularity in both the functional and space of admissible
transport plans, i.e. proving that the functional is smooth enough and
finding a compact set of admissible transport plans. Since the set of
admissible transport plans contains at least the product measure $\mu
\otimes \nu$, it is non-empty, and existence can be proved using a
topological argument that exploits the smoothness of the functional
and the compactness of the set. Once the existence of a transport plan
is proved, an interesting question is whether there exists a transport
map that corresponds to this transport plan. Unfortunately, problem (K) does not
directly exhibit the properties required by this path of reasoning. However, 
one can observe that (K) is a linearly constrained optimization 
problem. This calls for studying the dual formulation, as done by Kantorovich. This
dual formulation has a nice structure, that allows answering the
questions above (existence of a transport plan, and whether there is a
transport map that corresponds to this transport plan when it exists).

\subsection{The dual formulation of Kantorovich}

The dual formulation can be stated as follows\footnote{
  Showing the equivalence with problem (K) requires some care, the
 reader is referred to \cite{OTON} chapter 5. Note that \cite{OTON} uses
 a slightly different definition (with $\phi - \psi$ instead of $\phi + \psi$), that
 makes the detailed argument simpler but that breaks symmetry between $\phi$ and $\psi$.
 Since I stay at an elementary level, I prefer to keep the symmetry.
}~:

\begin{equation*}
   (D): \quad \quad \mbox{max} \left\{
           \int\limits_\Omega \phi d\mu + \int\limits_\Omega \psi d\nu \ | \ 
            \begin{array}{ll}
           (C1) & \phi \in L^1(\mu); \psi \in L^1(\nu); \\
           (C2) & \phi(x) + \psi(y) \le c(x,y) \forall (x,y) \in \Omega \times \Omega
            \end{array}
     \right\}
\end{equation*}

Following the classical image that gives some intuition about this formula, imagine now that you 
are hiring a transport company to do the job for you. The company has a special way of calculating
the price: the function $\phi(x)$ corresponds to
what they charge you for loading at $x$, and $\psi(y)$ what they charge for unloading at $y$. 
The company tries to maximize profit (therefore is looking for a max instead of a min), but they 
cannot charge you more than what it will cost you if you do the job yourself $(C2)$. \\

The existence for $(D)$ is difficult to study, since the class of admissible functions that satisfy
$(C1)$ and $(C2)$ is non-compact. However, more structure in the problem can be revealed by referring
to the notion of \emph{c-transform}, that exhibits a class of admissible functions with regularity~:

\begin{definition}
  Given a function ${\mathcal{X}}: \Omega \rightarrow \bar{\mathbb{R}}$, the c-transform ${\mathcal{X}}^c$ is 
  defined by~:
$$
   {\mathcal{X}}^c := \inf\limits_{x \in \Omega} c(x,y) - {\mathcal{X}}(x)
$$
\begin{itemize}
   \item If for a function $\phi$ there exists a function ${\mathcal{X}}$ such that $\phi = {\mathcal{X}}^c$, 
     then $\phi$ is said to be \emph{c-concave};
   \item ${\bf \Psi}_c(\Omega)$ denotes the set of c-concave functions on $\Omega$.
\end{itemize}
\end{definition}

It is now possible to make two observations, that allow us to restrict ourselves to
the class of c-concave functions for the possible choices for $\phi$ and $\psi$~:

\begin{observation}
  If $(\phi,\psi)$ is admissible for $(D)$, then $(\phi,\phi^c)$ is also admissible.
\end{observation}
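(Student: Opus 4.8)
The plan is to check the two admissibility conditions $(C1)$ and $(C2)$ directly for the pair $(\phi,\phi^c)$, noting that $\phi$ itself is unchanged, so only the part concerning the new second function $\phi^c$ requires real work. Throughout I would read the c-transform as the function of $y$ given by $\phi^c(y) = \inf_{x' \in \Omega}\bigl(c(x',y) - \phi(x')\bigr)$.

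First I would dispatch constraint $(C2)$, which is the easy half and follows purely from the definition of the infimum. Since $\phi^c(y)$ is an infimum over all $x'$, evaluating at the particular point $x' = x$ gives $\phi^c(y) \le c(x,y) - \phi(x)$, that is $\phi(x) + \phi^c(y) \le c(x,y)$ for every $(x,y) \in \Omega \times \Omega$. This is exactly $(C2)$ and needs no hypothesis beyond the definition of $\phi^c$.

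Next I would establish a two-sided bound that pins down $\phi^c$, which feeds the integrability step. The lower bound uses the admissibility of the original pair: from $\phi(x) + \psi(y) \le c(x,y)$ for all $x$ we get $\psi(y) \le c(x,y) - \phi(x)$ for all $x$, and taking the infimum over $x$ yields $\psi \le \phi^c$ pointwise on $\Omega$. The upper bound is the c-transform inequality of the previous paragraph: fixing any point $x_0$ at which $\phi$ is finite gives $\phi^c(y) \le c(x_0,y) - \phi(x_0)$. Hence $\psi(y) \le \phi^c(y) \le c(x_0,y) - \phi(x_0)$.

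Finally, constraint $(C1)$: I would conclude $\phi^c \in L^1(\nu)$ by sandwiching. The minorant $\psi$ lies in $L^1(\nu)$ by hypothesis, and the majorant $c(x_0,\cdot) - \phi(x_0)$ is $\nu$-integrable under the standing assumptions on the cost (for instance $c$ continuous on a compact $\Omega$, hence bounded), so $\phi^c$ is trapped between two $\nu$-integrable functions and is itself in $L^1(\nu)$; and $\phi \in L^1(\mu)$ is inherited from the original pair. The hard part will be this last integrability claim rather than $(C2)$, which is immediate: the delicate points are securing an integrable majorant for $\phi^c$ (where the hypotheses on $c$ and $\Omega$ enter) and, more pedantically, the measurability of $\phi^c$, which is fine because as an infimum of the continuous maps $y \mapsto c(x',y) - \phi(x')$ it is upper semicontinuous, hence Borel. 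This completes the verification that $(\phi,\phi^c)$ is admissible.
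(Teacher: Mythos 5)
Your proof is correct, and its core coincides exactly with the paper's own proof: the paper's entire argument is the verification of $(C2)$ by evaluating the infimum defining $\phi^c(y)$ at $x' = x$, which is your first paragraph verbatim. Where you differ is in scope. The paper, consistent with its stated elementary level, silently ignores the integrability constraint $(C1)$, whereas you actually verify it. Your sandwich argument is sound: the lower bound $\psi \le \phi^c$ (taking the infimum over $x$ in the admissibility inequality $\psi(y) \le c(x,y) - \phi(x)$) is in fact precisely the content of the paper's \emph{next} observation, that $\phi^c$ is a better candidate than $\psi$; combining it with the upper bound $\phi^c(y) \le c(x_0,y) - \phi(x_0)$ and the upper semicontinuity of an infimum of continuous functions gives $\phi^c \in L^1(\nu)$, modulo the extra standing hypotheses you correctly flag (continuity of $c$, and for instance compactness of $\Omega$ so that the majorant is bounded), which the paper never states for its Borel set $\Omega$ and generic convex cost. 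So your route buys a complete verification of admissibility as literally defined in $(D)$, at the price of assumptions the paper leaves implicit; the paper's proof buys brevity by treating $(C2)$ as the only content of the claim.
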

\begin{proof}
$$
\begin{array}{l}
    \left\{
        \begin{array}{l}
           \forall(x,y) \in \Omega \times \Omega, \phi(x) + \psi(y) \le c(x,y) \\
           \phi^c(y) = \inf\limits_{x \in \Omega} c(x,y) - \phi(x)
        \end{array}
    \right. \\
    \begin{array}{lcl}
        \phi(x) + \phi^c(y) & = & \phi(x) + \inf_{x^\prime \in \Omega}\left(  c(x^\prime,y) - \phi(x^\prime)  \right) \\
                            & \le & \phi(x) + c(x,y) - \phi(x) \\
                            & \le & c(x,y)
    \end{array}
\end{array}
$$
\end{proof}

\begin{observation}
   If $(\phi,\psi)$ is admissible for $(D)$, then a \emph{better candidate} can be found by replacing $\psi$ with $\phi^c$~:
\end{observation}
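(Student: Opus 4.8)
The plan is to show that the substitution $\psi \mapsto \phi^c$ cannot decrease the value of the objective in $(D)$. Since the term $\int_\Omega \phi\, d\mu$ is left untouched by this substitution, it suffices to establish that $\int_\Omega \phi^c\, d\nu \ge \int_\Omega \psi\, d\nu$. The natural route is to prove the stronger \emph{pointwise} inequality $\psi(y) \le \phi^c(y)$ for every $y \in \Omega$, and then integrate against $\nu$ using monotonicity of the integral.

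To obtain the pointwise inequality I would start from the admissibility constraint $(C2)$ satisfied by $(\phi,\psi)$, namely $\phi(x) + \psi(y) \le c(x,y)$ for all $(x,y)$. Fixing $y$ and rearranging gives $\psi(y) \le c(x,y) - \phi(x)$, and since this holds for \emph{every} $x \in \Omega$, I may pass to the infimum over $x$ on the right-hand side. By the very definition of the c-transform, that infimum is precisely $\phi^c(y)$, so $\psi(y) \le \phi^c(y)$ as desired.

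Combining the two steps yields $\int_\Omega \phi\, d\mu + \int_\Omega \phi^c\, d\nu \ \ge\ \int_\Omega \phi\, d\mu + \int_\Omega \psi\, d\nu$, so the pair $(\phi,\phi^c)$ scores at least as high as $(\phi,\psi)$. Its admissibility is already granted by the previous observation, which showed $(\phi,\phi^c)$ satisfies $(C2)$; hence the substitution genuinely produces a legitimate, and no-worse, candidate for $(D)$.

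I do not expect a real obstacle here, as the argument is essentially a rearrangement of $(C2)$ followed by taking an infimum. The only points needing a little care are the measurability of $\phi^c$ and the integrability $\phi^c \in L^1(\nu)$, which are required so that $\int_\Omega \phi^c\, d\nu$ is well defined and the appeal to monotonicity is valid. Here the inequality $\psi \le \phi^c$ together with $\psi \in L^1(\nu)$ bounds $\phi^c$ from below, while $\phi^c(y) \le c(x_0,y) - \phi(x_0)$ for any fixed $x_0$ bounds it from above; at the elementary level of this presentation these integrability issues are routine rather than substantive.
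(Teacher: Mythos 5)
Your proof is correct and follows essentially the same route as the paper: rearrange the constraint $(C2)$ to get $\psi(y) \le c(x,y) - \phi(x)$ for all $x$, take the infimum over $x$, and conclude the pointwise bound $\psi(y) \le \phi^c(y)$. The paper stops at this pointwise inequality, leaving the integration against $\nu$ and the admissibility of $(\phi,\phi^c)$ implicit, whereas you spell these out together with the routine integrability remarks --- a harmless elaboration of the same argument.
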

\begin{proof}
$$
\left\{
   \begin{array}{lcl}
       \phi^c(y) & = & \inf\limits_{y \in \Omega} c(x,y) - \phi(x) \\
       \forall x \in \Omega, \psi(y) & \le & c(x,y) - \phi(x)
   \end{array}
\right. 
   \quad \quad 
   \Rightarrow
   \quad 
   \psi(y) \le \phi^c(y)
$$
\end{proof}

Therefore, we have $\min(K) \quad = \max\limits_{\psi \in {\bf \Psi}_c(\Omega)} \quad \int\limits_\Omega \psi d\mu + \int\limits_\Omega \psi^cd\nu $ \\
I will not detail here the proof for the existence, the reader is referred to \cite{OTON}, Chapter 4.
The idea is that we are now in a much better situation, 
since the class of admissible functions ${\bf \Psi}_c(\Omega)$ is compact 
(provided that we fix the value of $\Psi$ at one point of $\Omega$ to remove the 
translational invariance degree of freedom of the problem). \\

Since we have computer implementation in mind, our goal is to find a numerical algorithm to compute an optimal transport map $T$. At first sight, 
though the values of the functionals match at a solution of $(K)$ and $(D)$, it seems to be difficult to deduce $T$ from a solution to the dual problem $(D)$.
However, there is a nice relation between the dual problem $(D)$ and the initial Monge's problem $(M)$, detailed in \cite{OTON}, chapters 9 and 10. 
The main result characterizes the pairs of points $(x,y)$ that are connected by the transport plan~:
\begin{theorem}
$$
\forall (x,y) \in \partial_c \psi, \nabla \psi(x) - \nabla_x c(x,y) = 0
$$
where $\partial_c \psi = \{ (x,y) | \phi(x) + \psi(y) = c(x,y) \}$ denotes the so-called \emph{c-subdifferential} of $\psi$.
\label{thm:MongeSol}
\end{theorem}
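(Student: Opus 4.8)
The plan is to interpret membership in the c-subdifferential as an optimality condition and then to read the claimed identity as the corresponding first-order stationarity condition. Fixing conventions so that $\psi$ is the c-concave potential carried by the source (which is what makes $\nabla\psi(x)$ meaningful) and $\psi^c$ its conjugate on the target, admissibility gives $\psi(x') + \psi^c(y) \le c(x',y)$ for every $x'$, while $(x,y)\in\partial_c\psi$ forces equality at $x'=x$. Rearranging, the scalar function
\[
  f_y(x') := c(x',y) - \psi(x')
\]
satisfies $f_y(x') \ge \psi^c(y)$ for all $x'$ with $f_y(x) = \psi^c(y)$, so $x$ is a global minimizer of $f_y$. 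Equivalently, the infimum defining the c-transform $\psi^c(y) = \inf_{x'} c(x',y) - \psi(x')$ is attained precisely at $x$; this is the whole content of the contact condition.

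Next I would invoke the first-order condition at this minimizer. If $x$ lies in the interior of $\Omega$ and both $c(\cdot,y)$ and $\psi$ are differentiable at $x$, then $\nabla f_y(x) = 0$, that is $\nabla_x c(x,y) - \nabla\psi(x) = 0$, which is exactly the claimed identity. For the quadratic cost $c(x,y) = \|x-y\|^2$ relevant to this paper one gets $\nabla\psi(x) = 2(x-y)$, so that the target is recovered from the source via $y = x - \tfrac12\nabla\psi(x)$; this is the precise point at which the potential determines the transport map, and where the link to Brenier's theorem and to the power diagram used in the following sections enters.

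The hard part will be regularity rather than algebra. Since $\psi$ is an infimum of the smooth family $x'\mapsto c(x',y)-\psi^c(y)$ indexed by $y$, it is only semi-concave (for the quadratic cost, $\|x\|^2-\psi(x)$ is convex), hence differentiable merely almost everywhere; at a non-differentiable point the correct statement is that $\nabla_x c(x,y)$ belongs to the superdifferential of $\psi$ at $x$, and one must argue that at $\mu$-almost every $x$ the potential is genuinely differentiable so that the equality holds $\mu$-almost everywhere. I would also need to rule out boundary effects by checking that the minimizing $x$ is interior, or else replace the vanishing-gradient condition by the appropriate variational inequality. Granting these standard facts, for which I would refer to \cite{OTON} (chapters 9 and 10), the differentiation step itself is immediate.
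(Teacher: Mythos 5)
Your proposal is correct and takes essentially the same approach as the paper: both arguments observe that the contact condition $(x,y)\in\partial_c\psi$ means the infimum defining the c-transform, i.e.\ the minimum of $x'\mapsto c(x',y)-\psi(x')$, is attained at $x$, and then read off the identity as the first-order optimality condition at that minimizer (the paper merely unpacks this as the two-sided directional-derivative inequality $\nabla\psi(x)\cdot w \le \nabla_x c(x,y)\cdot w$ applied to both $w$ and $-w$). Your closing discussion of semi-concavity, almost-everywhere differentiability, and superdifferentials matches the paper's own disclaimer that its derivation is formal and that a rigorous proof needs generalized differentiability and convex analysis, as in the cited reference.
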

\begin{proof}
See \cite{OTON} chapter 10. \\
I summarize the heuristic argument given at the beginning of the same chapter, that gives some intuition~:\\
Consider a point $(x,y)$ on the c-subdifferential $\partial_c \psi$, that satisfies $\phi(y) + \psi(x) = c(x,y)\ (1)$. \\
By definition, $\phi(y) = \psi^c(y) = \inf\limits_x c(x,y) - \psi(x)$, thus $\forall \tilde{x}, \phi(y) \le c(\tilde{x},y) - \psi(\tilde{x})$,
or $\phi(y) + \psi(\tilde{x}) \le c(\tilde{x},y)\ (2)$. \\
By substituting (1) into (2), one gets $\psi(\tilde{x}) - \psi(x) \le c(\tilde{x},y) - c(x,y)$ for all $\tilde{x}$.\\
Imagine now that $\tilde{x}$ follows a trajectory parameterized by $\epsilon$ and starting at $x$. One can compute
the gradient along an arbitrary direction $w$ by taking the
limit when $\epsilon$ tends to zero in the relation $\frac{\psi(\tilde{x}) - \psi(x)}{\epsilon} \le \frac{c(\tilde{x},y) - c(x,y)}{\epsilon}$.
Thus we have $\nabla \psi(x) \cdot w \le \nabla_x c(x,y) \cdot w$. The same derivation can be done with $-w$ instead of $w$, and one gets:
$\forall w, \nabla \psi(x) \cdot w = \nabla_x c(x,y) \cdot w$, thus
$\forall (x,y) \in \partial_c \psi, \nabla \psi(x) - \nabla_x c(x,y) = 0$. \\
\emph{Note: the derivations above are only formal ones and do not make a proof. The proof requires a much more careful analysis, using generalized definitions of differentiability and tools from convex analysis.}
\end{proof}

In the $L_2$ case, i.e. $c(x,y) = 1/2 \| x - y \|^2$, we have $\forall (x,y) \in \partial_c \psi, \nabla \psi(x) + y - x = 0$, 
thus, whenever the optimal transport map $T$ exists, we have
$T(x) = x - \nabla \psi(x) = \nabla (\|x\|^2/2 - \psi(x))$. Not only this gives an expression of $T$, but also it allows characterizing $T$ as the gradient of a 
\emph{convex} function, which is an interesting property since it implies that two ``transported particles'' $x_1 \mapsto T(x_1)$ and $x_2 \mapsto T(x_2)$ cannot collide, as shown below~:


\begin{observation}
If $c(x,y)$ = $1/2 \| x - y \|^2$ and $\psi \in {\bf \Psi}_c(\Omega)$, then $\bar{\psi}: x \mapsto \bar{\psi}(x) = \| x \|^2/2 - \psi(x)$ is convex 
(it is an equivalence if $\Omega = \mathbb{R}^d$).
\label{obs:PhiConvexity}
\end{observation}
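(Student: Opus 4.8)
The plan is to unfold the definition of c-concavity and recognize $\bar\psi$ as a pointwise supremum of affine functions of its argument, a form that is convex for free. Since $\psi \in {\bf \Psi}_c(\Omega)$, by definition there is a function $\mathcal{X}$ with $\psi = \mathcal{X}^c$, i.e. $\psi(y) = \inf_{x \in \Omega}\bigl( \tfrac12\|x-y\|^2 - \mathcal{X}(x)\bigr)$. Substituting this into $\bar\psi(y) = \tfrac12\|y\|^2 - \psi(y)$ converts the infimum into a supremum and yields $\bar\psi(y) = \sup_{x\in\Omega}\bigl(\tfrac12\|y\|^2 - \tfrac12\|x-y\|^2 + \mathcal{X}(x)\bigr)$.

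Next I would expand $\tfrac12\|x-y\|^2 = \tfrac12\|x\|^2 - \langle x,y\rangle + \tfrac12\|y\|^2$ inside the supremum. The two $\tfrac12\|y\|^2$ terms cancel, leaving $\bar\psi(y) = \sup_{x\in\Omega}\bigl(\langle x,y\rangle + \mathcal{X}(x) - \tfrac12\|x\|^2\bigr)$. The key observation is that for each fixed $x$ the map $y \mapsto \langle x,y\rangle + \mathcal{X}(x) - \tfrac12\|x\|^2$ is affine in $y$, since the bracketed constant does not depend on $y$. Hence $\bar\psi$ is a pointwise supremum of a family of affine functions, and such a supremum is always convex, which proves the first claim. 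Equivalently, writing $g(x) = \tfrac12\|x\|^2 - \mathcal{X}(x)$, one sees that $\bar\psi = g^*$ is the Legendre--Fenchel conjugate of $g$, and conjugates are automatically convex and lower semicontinuous. Note this direction holds for an arbitrary $\Omega$, as the supremum ranges over whatever index set $\Omega$ provides.

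For the converse (the equivalence when $\Omega = \mathbb{R}^d$) I would run the computation backwards using biconjugation. If $\bar\psi$ is convex and lower semicontinuous, then by Fenchel--Moreau $\bar\psi = \bar\psi^{\ast\ast}$, so setting $h := \bar\psi^{\ast}$ gives $\bar\psi(y) = \sup_x\bigl(\langle x,y\rangle - h(x)\bigr)$. Choosing $\mathcal{X}(x) = \tfrac12\|x\|^2 - h(x)$ and reversing the algebra above recovers $\mathcal{X}^c = \tfrac12\|\cdot\|^2 - \bar\psi = \psi$, exhibiting $\psi$ as c-concave.

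The main obstacle is the converse rather than the forward implication: the forward step is a one-line recognition of a sup-of-affine structure, whereas the equivalence leans on the Fenchel--Moreau theorem, which needs the supremum and infimum to range over the full space $\mathbb{R}^d$ — precisely why the equivalence is asserted only for $\Omega = \mathbb{R}^d$ — and requires $\bar\psi$ to be proper and lower semicontinuous so that it coincides with its biconjugate. Some care is also warranted with $\bar{\mathbb{R}}$-valued functions and with the admissibility of the infimum defining $\mathcal{X}^c$, so that $\psi$ does not degenerate to $-\infty$.
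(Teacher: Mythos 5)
Your forward direction is exactly the paper's argument: unfold $\psi = \mathcal{X}^c$, expand the square so the $\tfrac12\|y\|^2$ terms cancel, and recognize $\bar{\psi}$ as a pointwise supremum of functions affine in its argument (equivalently, a Legendre--Fenchel conjugate), hence convex. Where you go beyond the paper is the parenthetical equivalence for $\Omega = \mathbb{R}^d$: the paper asserts it without any proof, whereas you supply the converse via Fenchel--Moreau biconjugation, writing $\bar{\psi} = \bar{\psi}^{**}$, setting $h = \bar{\psi}^{*}$, and reading off $\mathcal{X}(x) = \tfrac12\|x\|^2 - h(x)$ with $\mathcal{X}^c = \psi$; this is the standard and correct route. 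Your closing caveat is also apt: the biconjugation step genuinely requires $\bar{\psi}$ to be proper and lower semicontinuous (automatic when $\bar{\psi}$ is finite-valued on $\mathbb{R}^d$, since finite convex functions are then continuous, but not in general for $\bar{\mathbb{R}}$-valued functions), so the equivalence as literally stated in the paper should be read with that qualification.
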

\begin{proof}
$$
\begin{array}{lcl}
   \psi(x) & = & \inf\limits_y \frac{|x -y|^2}{2} - \phi(y) \\
           & = & \inf\limits_y \frac{\|x\|^2}{2} - x \cdot y + \frac{\|y\|^2}{2} - \phi(y) \\
-\bar{\psi}(x) & = & \phi(x) - \frac{\|x\|^2}{2} = \inf\limits_y -x \cdot y + \left( \frac{\|y\|^2}{2} - \phi(y)  \right) \\
\bar{\psi}(x) & = & \sup\limits_y x \cdot y - \left( \frac{\|y\|^2}{2} - \phi(y) \right)
\end{array}
$$
The function $ x \mapsto x \cdot y - \left( \frac{\|y\|^2}{2} - \phi(y) \right)$ is linear in $x$, therefore the graph of 
$\bar{\psi}$ is the upper envelope of a family of hyperplanes, thus $\bar{\psi}$ is convex.
\end{proof}


\begin{observation}
Consider the trajectories of two particles parameterized by $t \in [0,1]$, $t \mapsto (1-t)x_1 + t T(x_1)$ and
$t \mapsto (1-t)x_2 + t T(x_2)$. If $x_1 \neq x_2$ and for $0 < t < 1$ the particles cannot collide.
\end{observation}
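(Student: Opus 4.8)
The plan is to use the characterization of $T$ established just above: in the $L_2$ case $T(x) = \nabla \bar\psi(x)$ with $\bar\psi(x) = \|x\|^2/2 - \psi(x)$ convex (Observation~\ref{obs:PhiConvexity}). The gradient of a convex function is a \emph{monotone} operator, which I can read off from the supporting-hyperplane inequality $\bar\psi(y) \ge \bar\psi(x) + \nabla\bar\psi(x)\cdot(y-x)$ applied twice (swapping $x$ and $y$) and summed. This yields, for any two points,
$$
(T(x_1) - T(x_2)) \cdot (x_1 - x_2) \ge 0,
$$
and this single inequality carries the whole argument.

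First I set up the collision condition. Writing $P_i(t) = (1-t)x_i + t\,T(x_i)$ for the position of particle $i$ at time $t$, a collision at time $t$ means $P_1(t) = P_2(t)$, that is
$$
(1-t)(x_1 - x_2) + t\,(T(x_1) - T(x_2)) = 0.
$$
To lighten the notation I put $a = x_1 - x_2$ and $b = T(x_1) - T(x_2)$, so that monotonicity reads $a\cdot b \ge 0$ and a collision amounts to $(1-t)a + t b = 0$.

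I then argue by contradiction. Suppose a collision occurs for some $0 < t < 1$. Since $x_1 \neq x_2$ we have $a \neq 0$, and the collision equation forces $b = -\tfrac{1-t}{t}\,a$ with coefficient $\tfrac{1-t}{t} > 0$ on the open interval. Taking the inner product with $a$ gives $a\cdot b = -\tfrac{1-t}{t}\,\|a\|^2 < 0$, contradicting $a\cdot b \ge 0$. Hence the particles cannot collide for $0 < t < 1$.

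The argument is short precisely because the real work was already invested in proving $\bar\psi$ convex. The only point that requires a little care is that the contradiction hinges on the \emph{strict} inequality $a\cdot b < 0$, which holds exactly because $a \neq 0$ and $0 < t < 1$; this is what lets it conflict with the non-strict $a\cdot b \ge 0$ coming from monotonicity. The exclusion of the endpoints is genuine rather than a defect: at $t = 1$ one may well have $T(x_1) = T(x_2)$, reflecting that an optimal transport map is allowed to merge mass, while at $t = 0$ the hypothesis $x_1 \neq x_2$ already prevents coincidence.
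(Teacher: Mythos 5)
Your proof is correct and follows essentially the same route as the paper's: assume a collision at some $t \in (0,1)$, take the inner product of the collision equation with $x_1 - x_2$, and derive a contradiction from the monotonicity of $\nabla\bar{\psi} = T$. One detail in your favor: the paper closes by asserting that both terms in the resulting sum are strictly positive, invoking $(x_1-x_2)\cdot(\nabla\bar{\psi}(x_1)-\nabla\bar{\psi}(x_2)) > 0$, which is \emph{strict} monotonicity and does not follow from mere convexity of $\bar{\psi}$; your version needs only the non-strict inequality $a \cdot b \ge 0$ and extracts the required strictness from $(1-t)\|a\|^2 > 0$ (using $a \neq 0$ and $t < 1$), so your handling of this step is actually the more careful of the two.
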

\begin{proof}
  By contradiction, suppose that you have $t \in (0,1)$ and $x_1 \neq x_2$ such that: 
$$
  \begin{array}{lcl}
   (1-t)x_1 + tT(x_1) & = & (1-t)x_2 + tT(x_2) \\
   (1-t)x_1 + t \nabla \bar{\psi}(x_1) & = & (1-t)x_2 + t \nabla \bar{\psi}(x_2) \\
   (1-t)(x_1 - x_2)  + t (\nabla \bar{\psi}(x_1) - \nabla \bar{\psi}(x_2)) & = & 0 \\
   \forall v, (1-t) v \cdot(x_1 - x_2)  + t  v \cdot (\nabla \bar{\psi}(x_1) - \nabla \bar{\psi}(x_2)) & = & 0 \\
   \mbox{   take } v = (x_1 - x_2) \\
   (1-t)\| x_1 - x_2 \|^2 + t (x_1 - x_2)\cdot(\nabla \bar{\psi}(x_1) - \nabla \bar{\psi}(x_2)) & = & 0
  \end{array}
$$
which is a contradiction since this quantity is the sum of two strictly positive numbers (
recalling the definition of the convexity of $\bar{\psi}$: 
   $\forall x_1 \neq x_2, (x_1-x_2) \cdot (\nabla \bar{\psi}(x_1) - \nabla \bar{\psi}(x_2)) > 0$
).

\end{proof}

At this point, we know that when the optimal transport map exists, it can be deduced from the function $\psi$
using the relation $T(x) = \nabla \bar{\psi} = x - \nabla \psi$. We now consider some ways of finding the 
function $\psi$.

The classical change of variable formula gives:
\begin{equation*}
   \forall B, \int_B \mu(x) d\mu = \mu(B) = \nu(T(B)) = \int_B \frac{1}{ \det JT(x)} T(x) d\nu
   \label{eqn:changevar}
\end{equation*}
where $JT$ denotes the Jacobian matrix of $T$.

If $\mu$ and $\nu$ both have a density $u$ and $v$ (i.e. $\forall B, \mu(B) = \int_B u(x)dx$ and
$\nu(B) = \int_B v(x)dx$), then one can (formally) consider (\ref{eqn:changevar}) in a pointwise manner~:
\begin{equation*}
    \forall x \in \Omega, u(x) = \frac{1}{\det JT(x)} v(T(x))\ ;
   \label{eqn:pointwise}
\end{equation*}

injecting $T=\nabla\bar{\psi}$ and $JT = H \bar{\psi}$ in (\ref{eqn:pointwise}) gives:
\begin{equation}
\forall x \in \Omega, u(x) = \frac{1}{\det H \bar{\psi}(x)} v (\nabla \bar{\psi}(x))
\label{eqn:MAE}
\end{equation}
where $H\bar{\psi}$ denotes the Hessian of $\bar{\psi}$. Equation
\ref{eqn:MAE} is known as the \emph{Monge-Amp\`ere} equation. It is a
highly non-linear equation, and its solution when it exists often has
singularities. It is similar to the eikonal equation that
characterizes the distance function and that has a singularity on the
medial axis.  Note that the derivations above are only formal,
studying the solutions of the Monge-Amp\`ere equation requires using
more elaborate tools, and several types of weak solutions can be
defined (viscosity solutions, solutions in the sense of Brenier,
\ldots). \\

Still keeping computer implementation in mind, one may consider three
different problem settings~:

\begin{itemize}
\item {\bf continuous:} if $\mu$ and $\nu$ have a density $u$ and $v$, it is possible
to numerically solve the Monge-Amp\`ere equation, as done in \cite{ACFM:BB:2000} and \cite{papadakis:hal-00816211};
\item {\bf discrete:} if both $\mu$ and $\nu$ are discrete (sums of Dirac masses), then
finding the optimal transport plan becomes an assignment problem, that can be solved with
some variants of linear programming techniques (see the survey in \cite{AP:BDM:2009});
\item {\bf semi-discrete:} if $\mu$ has a density and $\nu$ is discrete (sum of Dirac masses), then
  an optimal transport map exists. It has interesting connections with notions of computational 
  geometry. The remainder of this paper considers this problem setting. 
\end{itemize}

\subsection{The semi-discrete case}
\label{sect:semidiscreteOT}

I now consider that $\mu$ has a density $u$, and that
$\nu = \sum_{i=1}^k \nu_i \delta_{p_i}$ is a sum of $k$ Dirac masses,
that satisfies $\nu(\Omega) = \sum_{i=1}^k \nu_i = \mu(\Omega)$. Whenever $T$ exists, 
the pre-images of the Dirac masses $T^{-1}(p_i)$ partition $\Omega$ almost everywhere\footnote{
  except on a subset of measure 0 on the common boundaries of the parts.}. 
This subsection reviews the main results in \cite{DBLP:conf/compgeom/AurenhammerHA92},
showing that this partition corresponds to a geometrical structure called a power diagram.
Interestingly, from the point of view of computer implementation, the proof directly
leads to a numerical algorithm, as experimented in 2D in \cite{DBLP:journals/cgf/Merigot11}
and in 3D further in this paper.

\begin{definition}
Given a set $P$ of $k$ points $p_i$ in $\mathR^d$ and a set $W$ of $k$ real numbers $w_i$,
the \emph{Voronoi diagram} $\Vor(P)$ and the \emph{power diagram} $\Pow_W(P)$ are defined
as follows~:
\begin{itemize}
\item The Voronoi diagram $\Vor(P)$ is the
partition of $\mathR^d$ into the subsets $\Vor(p_i)$ defined by~: \\
   $\Vor(p_i) := \{ x | \|x-p_i\|^2 < \|x - p_j\|^2 \quad \forall j \neq i\}$;
\item the power diagram $\Pow_W(P)$ is the
partition of $\mathR^d$ into the subsets $\Pow_W(p_i)$ defined by~: \\
$\Pow_W(p_i) := \{ x | \|x-p_i\|^2 - w_i < \|x - p_j\|^2 - w_j \quad \forall j \neq i\}$;
\item the map $T_W$ defined by $\forall i, \forall p \in \Pow_W(p_i), T_W(p) = p_i$ is called
the \emph{assignment defined by the power diagram} $\Pow_W(P)$.
\end{itemize}
\end{definition}

It can be shown that the assignment defined by a power diagram is an optimal transport map
(the main argument of the proof is sketched further). Then one needs to determine - when it is 
possible\footnote{We will see further that it is always possible in this setting.} - 
the parameters of this power diagram (i.e. the weights) that realize the optimal transport towards a \emph{given}
discrete target measure $\nu$.  Intuitively, a power diagram may be thought-of as a 
generalization of the Voronoi diagram,
with additional ``tuning buttons'' represented by the weights $w_i$. Changing the weight 
$w_i$ associated with a point $p_i$ influences the area and the measure $\mu(\Pow_W(p_i))$ of its power cell 
(the higher the weight, the larger the power cell). Though the relation between the weights and the measures
of the power cells is non-trivial\footnote{Misleadingly, the term 'weight' 
seems similar to 'mass', but both notions are not directly related.}, it is well behaved, and as shown below, 
one can prove the existence and uniqueness of a set of weights such that the measure of each power cell $\mu(\Pow_W(p_i))$
matches a prescribed value $\nu_i$. In this case, the prescribed measures $\nu_i$ are referred to as 
\emph{capacity constraints}, and the power diagram is said to be \emph{adapted} to the capacity constraints. 
At this point, since we already know that the assignment defined by a power 
diagram is an optimal transport map, then we are done (i.e. the assignment defined by the power diagram is the optimal
transport map that we are looking for). I shall now give more details about the proofs of the
two parts of the reasoning.


\begin{figure}[t]
    \begin{minipage}{0.49\linewidth}
      \centerline{
         \includegraphics[width=53mm]{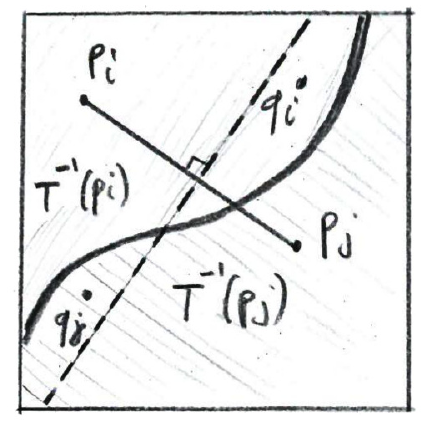}
      }
      \caption{
        Illustration of the (by contradiction) argument that the common boundary between the pre-images
        of $p_i$ and $p_j$ is contained by a straight line orthogonal to $[p_i, p_j]$.
      }
      \label{fig:StraightBoundary}
    \end{minipage}
    \begin{minipage}{0.49\linewidth}
      \centerline{
        \includegraphics[width=53mm]{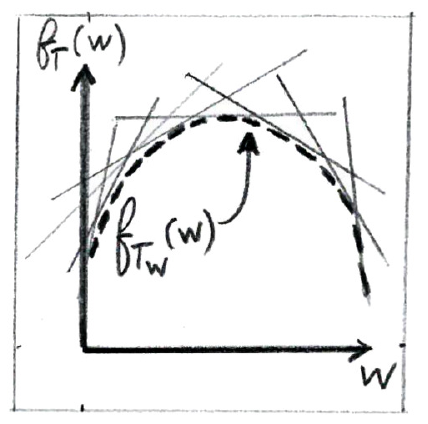}
      }
      \caption{The weight vector that defines an optimal transport map can be found as the maximizer of a convex function,
         defined as the lower envelope of a family of linear functions.}
      \label{fig:LowerEnveloppe}
    \end{minipage}
\end{figure}

\begin{theorem}
Given a set of points $P$ and a set of weights $W$, the assignment $T_{P,W}$ defined by the 
 power diagram is an optimal transport map.
\end{theorem}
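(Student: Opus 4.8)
The plan is to realize the power-diagram assignment as the complementary-slackness optimum of the dual problem $(D)$, so that optimality follows from weak duality rather than from any direct comparison of transport costs. First I would, from the given weights $W$, build the c-concave source potential $\psi(x) = \inf_i \left( \tfrac12\|x-p_i\|^2 - \tfrac12 w_i \right)$, i.e.\ the lower envelope of Figure~\ref{fig:LowerEnveloppe}. Expanding the squared norms, the index $i$ that realizes this infimum at a point $x$ is exactly the one satisfying $\|x-p_i\|^2 - w_i \le \|x-p_j\|^2 - w_j$ for all $j$; hence $p_i$ achieves the infimum precisely on $\Pow_W(p_i)$, and $T_{P,W}(x) = p_i$ there. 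Equivalently, by Observation~\ref{obs:PhiConvexity} the function $\bar\psi(x) = \|x\|^2/2 - \psi(x) = \sup_i \left( p_i\cdot x - \tfrac12(\|p_i\|^2 - w_i) \right)$ is convex and piecewise linear, with $\nabla\bar\psi = p_i = T_{P,W}$ on each cell, so that $T_{P,W} = \nabla\bar\psi = x - \nabla\psi$ has exactly the form prescribed for an optimal map by Theorem~\ref{thm:MongeSol}.

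Next I would fix the target measure as the pushforward $\nu := T_{P,W}\sharp\mu = \sum_i \mu(\Pow_W(p_i))\,\delta_{p_i}$, so that $T_{P,W}$ is by construction a transport map from $\mu$ to $\nu$; since the cell boundaries lie on finitely many hyperplanes and $\mu$ has a density $u$, those boundaries are $\mu$-negligible and $T_{P,W}$ is well defined $\mu$-almost everywhere. I would then pair $\psi$ with the target-side values $\psi^\star_i := w_i/2$ and check dual feasibility: by the very definition of the infimum, $\psi(x) + \psi^\star_i \le \tfrac12\|x-p_i\|^2$ for all $x$ and $i$, with equality exactly when $x \in \Pow_W(p_i)$. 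Thus the transport plan $(Id\times T_{P,W})\sharp\mu$ is concentrated on the equality set, i.e.\ on the c-subdifferential $\partial_c\psi$. The same equality-on-cells is what Figure~\ref{fig:StraightBoundary} depicts geometrically: subtracting the two cell inequalities for $x\in\Pow_W(p_i)$ and $x'\in\Pow_W(p_j)$ yields $(x-x')\cdot(p_i-p_j)\ge 0$, the pairwise monotonicity one expects of an optimal assignment.

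Finally I would invoke weak duality to close the argument. For any competing plan $\gamma\in\Pi(\mu,\nu)$, feasibility of $(\psi,\psi^\star)$ together with the marginal constraints gives $\int c\,d\gamma \ge \int_\Omega \psi\,d\mu + \sum_i \psi^\star_i\,\nu_i$. For the plan induced by $T_{P,W}$ the complementary slackness established above turns this bound into an equality, $\int_\Omega c(x,T_{P,W}(x))\,d\mu = \int_\Omega \psi\,d\mu + \sum_i \psi^\star_i\,\nu_i$, so $T_{P,W}$ attains the dual lower bound and no admissible plan can have smaller cost.

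The step I expect to be the real obstacle, as opposed to bookkeeping, is precisely this sufficiency direction: Theorem~\ref{thm:MongeSol} only states a \emph{necessary} condition on the support of an optimal plan, so I cannot simply read off optimality from the form $T_{P,W}=\nabla\bar\psi$. What must be justified is that dual feasibility plus concentration on $\partial_c\psi$ \emph{forces} optimality --- the weak-duality inequality above, whose rigorous version rests on the strong duality between $(K)$ and $(D)$ recalled earlier. Everything else --- matching the power-cell inequalities to the c-subdifferential, and verifying that the cell boundaries carry no $\mu$-mass so that $T_{P,W}$ is genuinely a map --- is routine once this principle is in place.
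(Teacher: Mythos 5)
Your proof is correct, but it takes a genuinely different route from the paper's. The paper argues geometrically: by the exchange (swap) argument of Figure~\ref{fig:StraightBoundary}, any optimal map must have pre-images whose common boundaries are straight and orthogonal to the segments $[p_i,p_j]$, and it then identifies such partitions with power diagrams by citing \cite{DBLP:journals/siamcomp/Aurenhammer87}, deferring the complete proof to Aurenhammer et al. You instead run the argument through Kantorovich duality: you build the c-concave potential $\psi(x)=\inf_i\left(\tfrac{1}{2}\|x-p_i\|^2-\tfrac{1}{2}w_i\right)$, pair it with the target values $\psi^\star_i=w_i/2$, verify dual feasibility with equality exactly on $\Pow_W(p_i)$, and conclude that the plan induced by $T_{P,W}$ attains the dual bound. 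This buys two things the paper's sketch does not: it proves the sufficiency direction directly (the exchange argument, as written, establishes a \emph{necessary} condition satisfied by optimal maps, not that the power-diagram assignment is optimal), and it yields optimality among \emph{all} plans in $\Pi(\mu,\nu)$, not merely among maps, so no appeal to existence or uniqueness of optimal maps is needed. It also turns into an actual proof the relation $\phi(y_i)=\tfrac{1}{2}w_i$ that the paper only observes informally in Section~\ref{sect:SemiDiscKanto}.

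One correction to your closing paragraph: the step you flag as the ``real obstacle'' is in fact the easiest part, and it does \emph{not} rest on strong duality between $(K)$ and $(D)$. The inequality $\int c\,d\gamma \ge \int_\Omega \psi\,d\mu + \sum_i \psi^\star_i\,\nu_i$ for a competing plan $\gamma\in\Pi(\mu,\nu)$ follows by integrating the pointwise feasibility inequality $\psi(x)+\psi^\star_i\le c(x,p_i)$ against $\gamma$ and using only the marginal constraints; this is weak duality, an elementary computation requiring no compactness, existence, or duality theorem. Since your plan achieves equality, optimality follows immediately, and your argument is fully self-contained (modulo the routine check, which you made, that the cell boundaries are $\mu$-negligible).
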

\begin{proof}

  I give here the main idea of the proof (see \cite{DBLP:conf/compgeom/AurenhammerHA92} for the 
complete one).
The main argument is that if $T$ is an optimal transport map, then the common 
boundary of the pre-images $T^{-1}(p_i)$
and $T^{-1}(p_j)$ of two Dirac masses is a straight line orthogonal to the segment $[p_i, p_j]$.
The argument, obtained by contradiction, is illustrated in Figure \ref{fig:StraightBoundary}. Suppose
that the common boundary between the pre-images $T^{-1}(p_i)$ and $T^{-1}(p_j)$ is not a straight line (thick
curve in the figure), then one can find a straight line orthogonal to the segment $[p_i, p_j]$ that has
an intersection with the common boundary (dashed line in the figure), and two points $q_i$ and $q_j$ located as 
shown in the figure. Then, it is clear (by the Pythagorean theorem) that re-assigning $q_j$ to $T^{-1}(p_i)$ and $q_i$ to $T^{-1}(p_j)$ lowers the
transport cost, which contradicts the initial assumption. It is then possible to establish that
the pre-images correspond to power cells, by invoking some properties of power diagrams \cite{DBLP:journals/siamcomp/Aurenhammer87}.
\end{proof}

\begin{theorem}
  Given a measure $\mu$ with density, a set of points $(p_i)$ and prescribed masses $\nu_i$
  such that $\sum \nu_i = \mu(\Omega)$, there exists a weights vector $W$ such that 
  $\mu(\Pow_W(p_i)) = \nu_i$.
  \label{thm:SemiDiscrete}
\end{theorem}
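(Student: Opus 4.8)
The plan is to realize the capacity constraints as the stationarity conditions of a concave function of the weights, and then to show that this function attains its maximum. Concretely, I would introduce
\begin{equation*}
  F(W) = \int_\Omega \min_{1\le j\le k}\bigl(\|x-p_j\|^2 - w_j\bigr)\,d\mu(x) + \sum_{i=1}^k \nu_i\, w_i,
\end{equation*}
which is the semi-discrete specialization of the Kantorovich dual of Section~\ref{sect:OT}: the integrand is, up to the usual factor, the $c$-transform of the discrete potential $W=(w_1,\dots,w_k)$, and by the definition of $\Pow_W$ the index $i$ realizes the minimum exactly on the power cell $\Pow_W(p_i)$. The first step is to compute the gradient and to check that
\begin{equation*}
  \frac{\partial F}{\partial w_i}(W) = \nu_i - \mu\bigl(\Pow_W(p_i)\bigr),
\end{equation*}
so that $W$ is a critical point of $F$ if and only if the power diagram $\Pow_W(P)$ is adapted to the capacity constraints.

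The second step is to justify this gradient formula together with the regularity of $F$. For each fixed $x$ the integrand is a minimum of functions that are affine in $W$, hence concave and piecewise linear in $W$; integrating against the nonnegative measure $\mu$ and adding the linear term $\sum_i \nu_i w_i$ then shows that $F$ is concave on $\mathR^k$. The only delicate point is differentiating under the integral sign: the integrand fails to be smooth in $W$ only at those $x$ lying on some bisector $\{x : \|x-p_i\|^2 - w_i = \|x-p_j\|^2 - w_j\}$, which is a finite union of hyperplanes. Here I would invoke the hypothesis that $\mu$ has a density: these hyperplanes are Lebesgue-null, hence $\mu$-null, so differentiation under the integral is legitimate, the claimed gradient follows, and $W \mapsto \mu(\Pow_W(p_i))$ is continuous.

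The third step, which I expect to be the main obstacle, is to prove that the concave function $F$ actually attains its maximum. Since adding a constant $c$ to every weight leaves $F$ unchanged --- the integral drops by $c\,\mu(\Omega)$ while the linear term rises by $c\sum_i \nu_i = c\,\mu(\Omega)$, using $\sum_i \nu_i = \mu(\Omega)$ --- I would pass to the slice $\{\max_i w_i = 0\}$ and establish coercivity there. Under this normalization every weight satisfies $w_j \le 0$, so each summand $\|x-p_j\|^2 - w_j$ is nonnegative and $0 \le \int_\Omega \min_j(\cdots)\,d\mu \le M\,\mu(\Omega)$, where $M = \max_j \sup_{x\in\Omega}\|x-p_j\|^2$ is finite because $\mu$ is compactly supported. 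Since moreover every $\nu_i>0$, the linear term obeys $\sum_i \nu_i w_i \le (\min_i \nu_i)\,\min_i w_i$, whence
\begin{equation*}
  F(W) \le M\,\mu(\Omega) + \bigl(\min_i \nu_i\bigr)\,\min_i w_i \longrightarrow -\infty
\end{equation*}
as $\min_i w_i \to -\infty$, that is, as $\|W\|\to\infty$ within the slice. A concave function that is coercive on this finite-dimensional slice attains its maximum at some finite $W^\ast$, and by the smoothness established above $\nabla F(W^\ast)=0$; this is exactly $\mu(\Pow_{W^\ast}(p_i)) = \nu_i$ for every $i$, proving the theorem.

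The coercivity estimate is the heart of the argument, and it is precisely where the standing hypotheses are needed: strict positivity of every prescribed mass $\nu_i$, so that driving any single weight to $-\infty$ is penalized, and compact support of $\mu$, so that $M<\infty$. I would close by noting that the weight vector is in fact unique up to an additive constant, which follows from strict concavity of $F$ on the slice at any configuration where all power cells carry positive mass; only existence, however, is required here.
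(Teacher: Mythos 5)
Your proof is correct, and its core object is literally the same as the paper's: since the minimum over $j$ of $\|x-p_j\|^2-w_j$ is realized by the power-diagram assignment $T_W$, your $F$ coincides with the paper's $g(W)=f_{T_W}(W)+\sum_i\nu_iw_i$, and both arguments rest on concavity of a lower envelope of functions affine in $W$. The differences lie in how the two remaining steps are discharged. For the gradient formula $\partial F/\partial w_i=\nu_i-\mu(\Pow_W(p_i))$, the paper invokes the envelope theorem over the family of all assignments $T$ (citing its general-choice-set version), whereas you differentiate under the integral sign, using the density hypothesis to make the bisector hyperplanes $\mu$-negligible; these justify the same formula, and yours is the more self-contained of the two. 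The genuine added value of your write-up is the last step: the paper concludes by asserting that $g$, being concave, ``has a unique global maximum where the gradient is zero,'' which is not literally true --- a concave function on $\mathR^k$ need not attain its supremum (a nonconstant linear function does not), and uniqueness of the maximizer in fact fails here because $g$ is invariant under $W\mapsto W+c\mathbf{1}$. Your coercivity argument fills exactly this gap: translation invariance (which uses $\sum_i\nu_i=\mu(\Omega)$) reduces the problem to the slice $\{\max_iw_i=0\}$, boundedness of $\Omega$ controls the integral term, and strict positivity of the $\nu_i$ drives the linear term to $-\infty$, so a maximizer exists and is a global critical point of the smooth function $F$. You also correctly identify the two hypotheses (every $\nu_i>0$, compact support of $\mu$) that the paper leaves implicit. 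Two cosmetic remarks: the slice $\{\max_iw_i=0\}$ is closed but not convex, so the attainment step should be phrased as ``continuous and coercive on a closed set'' (concavity is then not even needed, since any global maximizer of a differentiable function is a critical point); and your closing uniqueness claim should be stated as uniqueness up to an additive constant, which is the invariance already noted.
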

\begin{proof}
Consider the function $f_T(W) = \int_\Omega \| x - T(x) \|^2 - w_{T(x)} d\mu$, where $T: \Omega \rightarrow P$ is 
an \emph{arbitrary} assignment. One can observe that:
\begin{itemize}
\item If the assignment $T$ is fixed, $f_T(W) = \int_\Omega \| x - T(x) \|^2 d\mu - \sum_{i=1}^{k} w_i \mu(T^{-1}(p_i))$ is
  affine in $W$. In Figure \ref{fig:LowerEnveloppe}, the graph of $f_T(W)$ for a fixed assignment $T$ corresponds to
  one of the straight lines (note that in the figure, the ``W axis'' symbolizes $k$ coordinates);
\item we now consider a fixed value of $W$ and different assignments $T$. Among all the possible $T$'s, it is clear
that $f_T(W)$ is minimized by $T_W$, the assignment defined by the power diagram with weights $W$ (the definition
of the power cell minimizes at each point of $\Omega$ the integrand in the equation of $f_T(W)$).
\end{itemize}

Now take $T = T_W$ in $f_T(W)$, in other words, consider the function
$f_{T_W}(W)$. Its graph, depicted as a dashed curve in Figure
\ref{fig:LowerEnveloppe}, is the lower envelope of a family of
hyperplanes, thus it is a concave function, with a single maximum. For the
next steps of the proof, we now need to compute the gradient $\nabla_W f_{T_W}(W)$. Note that
when computing the variations of $f_{T_W}(W)$, both the
argument $W$ of $f$ and the parameter $T_W$ change, making the
computations quite involved. When $T_W$ changes, the power cells change, and one
needs to compute integrals over varying domains. However, it is possible to drastically
simplify computations by using the \emph{envelope theorem}. Given a parameterized family of 
functions $f_T(W)$ (in our case, the parameter is $T$), 
whenever the gradient of $\nabla_W f_{T_W}(W)$ exists, it is equal to the gradient $\nabla_W f_{T^*}(W)$ computed
at the minimizer $T^*$ ($f_{T_W}$ in our case). In other words, when computing the gradients, one can directly
use the expression of $f_T(W)$ and ignore the variations of $T$ in function of $W$. In Figure \ref{fig:LowerEnveloppe},
it means that the tangent to $f_{T_W}$ at $W$ corresponds to the (linear) graph of $f_T(W)$ with a fixed $T = T_W$.
Note that in our case, the so-called \emph{choice set}, i.e. where $T$ is chosen, is the set of all the assignments between $\Omega$ and
$P$. This requires a special version of the envelope theorem that works for such a general choice sets 
\cite{RePEc:ecm:emetrp:v:70:y:2002:i:2:p:583-601}. 

One can see that the components of the gradient correspond to the (negated) measures of the power cells~:
$$
\begin{array}{lcl}
 \frac{\partial f_{T_W}(W)}{\partial w_i} & = & \nabla_W \left( 
           \underbrace{\int_\Omega \| x - T(x) \|^2 d\mu}_{\mbox{constant}(W)} - \sum\limits_{i=1}^{k} w_i \mu(T^{-1}_W(p_i)) \right) \\[12mm]
                   & = & - \mu(T_W^{-1}(p_i)) = - \mu( \Pow_W(p_i) )
\end{array}
$$

We are now in a very good situation to establish the existence and uniqueness of the weight vector $W$ that realizes the optimal transport map. 
The idea is to use $f_{T_W}$ to construct a function $g$ that has a global maximum realized at a weight vector such that
the measures of the power cells match the prescribed measures. Consider the function $g$ 
defined by $g(W) = f_{T_W}(W) + \sum_i \nu_i w_i$. The components of the gradient of $g$ are
given by $\partial g / \partial w_i = -\mu(\Pow_W(p_i)) + \nu_i$. This function is also concave (it is 
the sum of a concave function plus a linear one), therefore it has a unique global maximum where the gradient is zero. 
Therefore, at the maximum of $g$, for each power cell, the measure $\mu(\Pow_W(p_i))$ matches the prescribed measure $\nu_i$.
\end{proof}

Besides showing the existence of a semi-discrete transport map and characterizing it as the assignment defined by a power diagram, 
the proof in Theorem \ref{thm:SemiDiscrete} directly leads to a numerical algorithm, as shown in \cite{DBLP:journals/cgf/Merigot11},
described in Section \ref{sect:numerics} further.
A similar algorithm can be obtained by starting from a discrete version of the 
Monge-Ampere equation and the characterization of $T$ as the
gradient of a piecewise linear convex function\cite{DiscreteMA}.

\subsection{Relation with Kantorovich's dual formulation}
\label{sect:SemiDiscKanto}
It is interesting to see the relation between the proof of Aurenhammer et.al that does not use the formalism of optimal transport, 
and the dual formulation of optimal transport. Interestingly, one can remark that the same argument 
(lower envelope of hyperplanes) is used to establish the 
concavity of $f_{T_W}$ in Theorem \ref{thm:SemiDiscrete} and the convexity of $\bar{\psi}$ in Observation \ref{obs:PhiConvexity}. 
The relation between both formulations can be further explained if we link the Kantorovich potential $\phi$ and the
weights $w_i$ with the relation $\phi(y_i) = 1/2 w_i$. For instance, 
injecting $\phi(y_i) = 1/2 w_i$ and $c(x,y) = 1/2 \| x-y \|^2$ into
$\psi(x) = \phi^c(x) = \inf_y c(x,y) - \phi(y)$ gives $\psi(x) = 1/2 \inf_i \| x-y_i \|^2 - w_i$. This corresponds to the
definition of the power cells (intuitively, the $\inf$ in the definition of $\phi^c$ is the same as the $\inf$ in the definition
of the power cell). Now consider $T(x) = x - \nabla \psi(x)$. Still using the expression of $\psi(x)$ above, we get
$T(x) = x - 1/2 \nabla_x(\| x - y_i \|^2 - w_i) = y_i$. This connects the characterization of $T$ as the solution of 
$\nabla \phi(x) - \nabla_x c(x,y) = 0$ (Theorem \ref{thm:MongeSol}) with the characterization of $T$ as the assignment 
defined by the power diagram (Theorem \ref{thm:SemiDiscrete}). This corresponds to the point of view
developed in \cite{DiscreteMA}.

\begin{figure}[t]
    \begin{minipage}{0.56\linewidth}
      \centerline{
         \includegraphics[width=40mm]{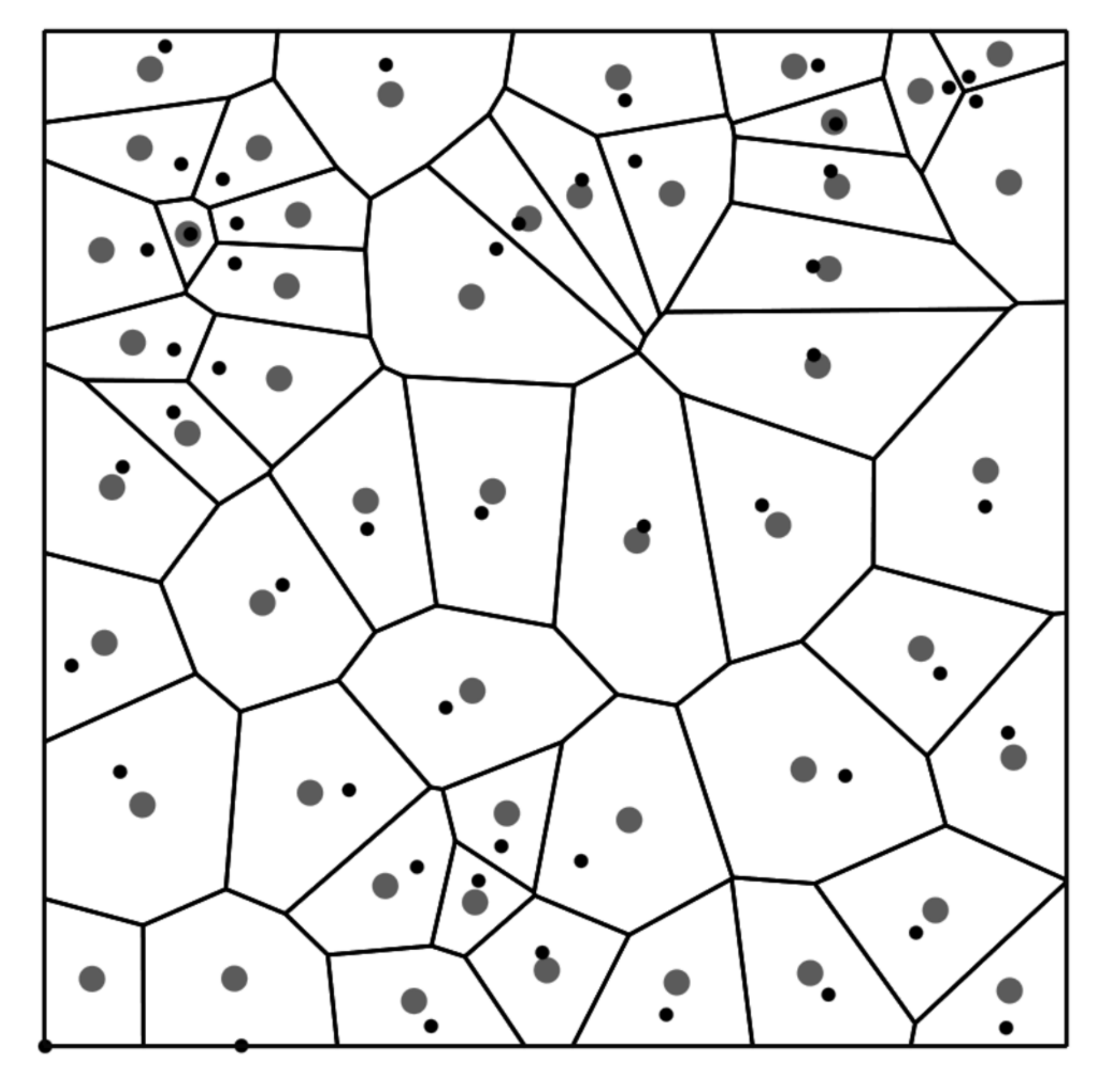}
         \includegraphics[width=40mm]{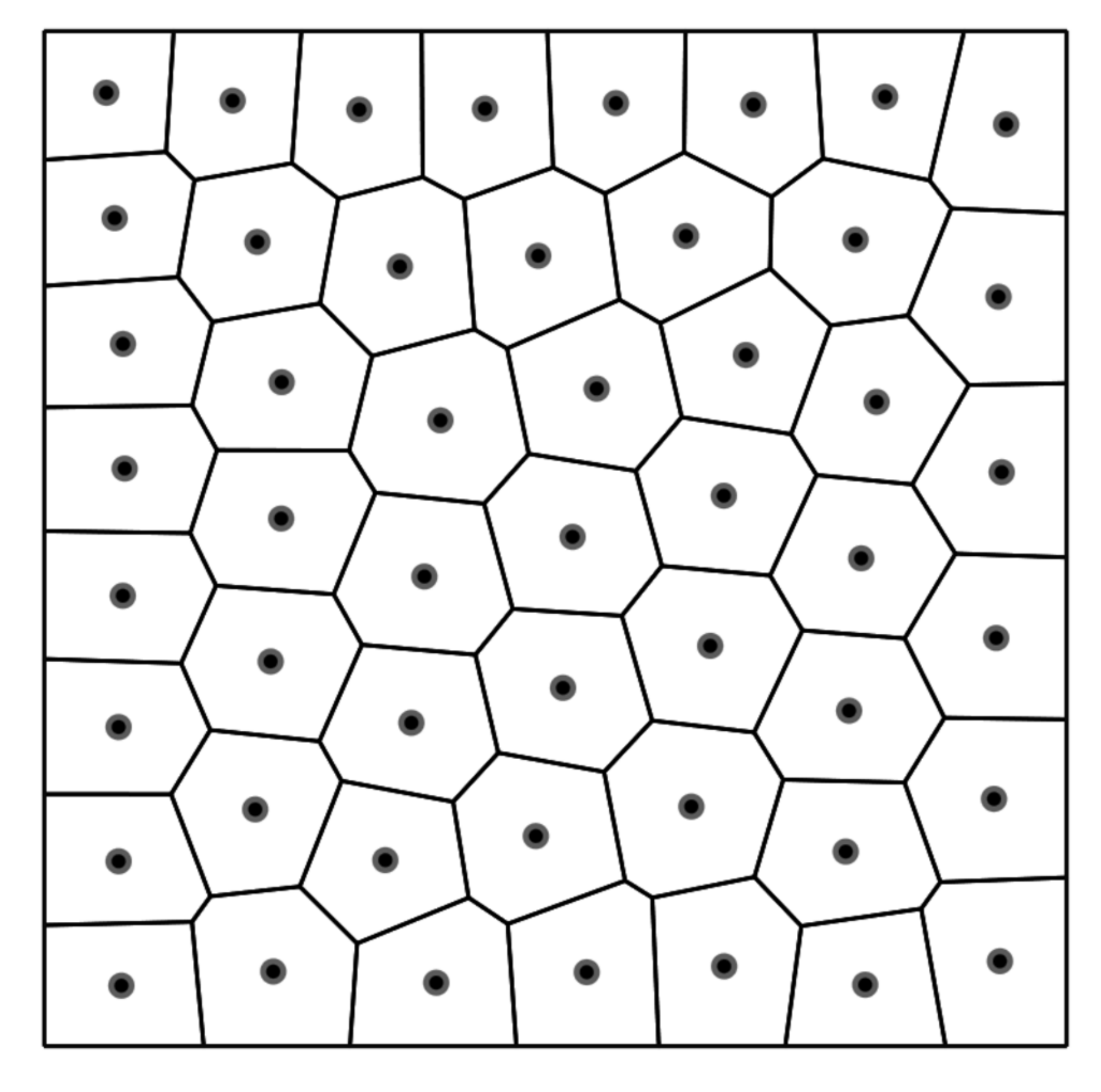}
      }
      \caption{
        Left: random points (black dots), Voronoi diagram and cell centroids (gray dots); 
        Right: a barycentric Voronoi diagram is a local minimizer of $Q$.
      }
      \label{fig:CVT}
    \end{minipage}
    \begin{minipage}{0.42\linewidth}
      \centerline{
        \includegraphics[width=40mm]{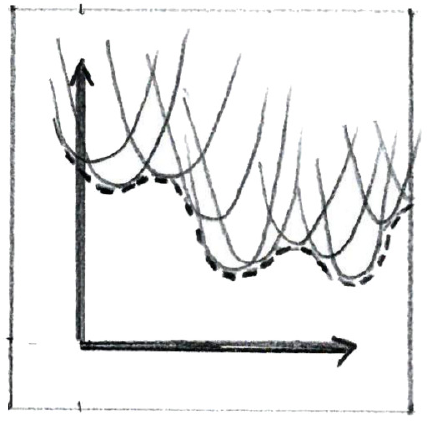}
      }
      \caption{The quantization noise power $Q$ minimized in vector
      quantization is a lower envelope. 
		 }
      \label{fig:LowerEnveloppeCVT}
    \end{minipage}
\end{figure}

\subsection{Relation with optimal sampling}
\label{sect:CVTandOT}

In this section, I exhibit some relations between
semi-discrete optimal transport and another problem referred to as
\emph{optimal sampling} (or \emph{vector quantization}).  Given a
compact $\Omega \subset \mathR^d$, a measure $\mu$, and a set of $k$
points $Y$ in $\mathR^d$, the \emph{quantization noise power} of $Y$
is defined as~:
\begin{equation}
Q(Y) := \int_\Omega \min_i \| x - y_i \|^2 d\mu = \sum\limits_{i=1}^k \int_{\Vor(y_i)} \| x - y_i \|^2 d\mu
\label{eqn:CVT}
\end{equation}

The quantization noise power measures how good $Y$ is at ``sampling''
$\Omega$ (the smaller, the better), see the survey in \cite{Du:1999:CVT:340312.340319}.  
The \emph{vector quantization problem} consists in minimizing $Q(Y)$
(i.e. finding the poinset $Y$ that best samples $\Omega$). This notion
comes from signal processing theory, and was used to find the optimal
assignment of frequency bands for multiplexing communications in a
single channel \cite{Lloyd82leastsquares}.  Designing a numerical
algorithm that optimizes $Q$ requires to evaluate the gradient of
$Q$. This requires computing integrals over varying domains (since the
Voronoi cells of the $y_i$'s depend on the $y_i$'s), which requires
several pages of careful derivations, as done in
\cite{Iri1984,Du:1999:CVT:340312.340319}. At the end, most of the
terms cancel-out, leaving a simple formula (see below). One can note
the similarity between the quantization noise power (Equation
\ref{eqn:CVT}) and the objective function maximized by the weight
vector in semi-discrete optimal transport (proof of Theorem
\ref{thm:SemiDiscrete}). This suggests using the same type of argument
(envelope theorem) to directly obtain the gradient of $Q$~:

\begin{observation}
The function $Q$ is of class $C^1$ (at least \footnote{it is in fact of class $C^2$ almost everywhere \cite{liu:onCVT:09}}) 
 and the components of its gradient relative to one of the point $y_i$ is given by:
$$
    \nabla_{y_i} Q(Y) = 2 m_i (y_i - g_i)
$$
where $m_i = \mu(\Vor(y_i)) = \int_{\Vor(y_i)}d\mu$ denotes the mass of the Voronoi cell $\Vor(y_i)$ and $g_i = 1/m_i \int_{\Vor(y_i)} x d\mu$ denotes
the centroid of the Voronoi cell $\Vor(y_i)$.
\end{observation}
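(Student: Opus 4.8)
The plan is to reproduce, almost verbatim, the envelope-theorem argument from the proof of Theorem \ref{thm:SemiDiscrete}, replacing the weighted power-cell functional by the unweighted Voronoi functional. First I would introduce a parameterized family indexed by an \emph{arbitrary} assignment $T : \Omega \to Y$, namely
$$
Q_T(Y) := \int_\Omega \| x - T(x) \|^2 \, d\mu = \sum_{i=1}^k \int_{T^{-1}(y_i)} \| x - y_i \|^2 \, d\mu .
$$
Two observations parallel those of Theorem \ref{thm:SemiDiscrete}: for a \emph{fixed} assignment $T$ the domains $T^{-1}(y_i)$ do not depend on $Y$, so each $y_i$ enters only through a smooth integrand and $Q_T$ is a smooth (indeed $C^\infty$) function of $Y$; and for a \emph{fixed} $Y$ the minimum of $Q_T(Y)$ over all assignments $T$ is attained by the nearest-point (Voronoi) assignment $T_Y$, since $\min_i \| x - y_i \|^2$ minimizes the integrand pointwise. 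Hence $Q(Y) = Q_{T_Y}(Y) = \min_T Q_T(Y)$ is a lower envelope of the smooth family $\{Q_T\}$ (Figure \ref{fig:LowerEnveloppeCVT}).

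Next I would invoke the envelope theorem exactly as in Theorem \ref{thm:SemiDiscrete}: wherever $\nabla_Y Q$ exists it equals $\nabla_Y Q_T$ evaluated at the minimizing parameter $T = T_Y$, so one may differentiate while treating the Voronoi cell $T_Y^{-1}(y_i) = \Vor(y_i)$ as frozen. Differentiating the integrand of $\int_{\Vor(y_i)} \| x - y_i \|^2 \, d\mu$ then gives
$$
\nabla_{y_i} Q(Y) = \int_{\Vor(y_i)} \nabla_{y_i} \| x - y_i \|^2 \, d\mu = \int_{\Vor(y_i)} 2 (y_i - x) \, d\mu = 2 \Big( m_i\, y_i - \int_{\Vor(y_i)} x \, d\mu \Big),
$$
and substituting $m_i = \mu(\Vor(y_i))$ and $g_i = \frac{1}{m_i} \int_{\Vor(y_i)} x \, d\mu$ yields the claimed $\nabla_{y_i} Q(Y) = 2 m_i (y_i - g_i)$.

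The main obstacle is not this computation but the justification of the envelope step and of the asserted $C^1$ regularity. As in Theorem \ref{thm:SemiDiscrete}, the choice set here is the full (infinite-dimensional) set of assignments $\Omega \to Y$, so a general-choice-set version of the envelope theorem is needed \cite{RePEc:ecm:emetrp:v:70:y:2002:i:2:p:583-601}; in particular one must argue that the contributions coming from the motion of the Voronoi boundaries vanish. This is exactly where the ``several pages of careful derivations'' of \cite{Iri1984,Du:1999:CVT:340312.340319} are spent: in the direct approach one differentiates $\sum_i \int_{\Vor(y_i)} \| x - y_i \|^2 \, d\mu$ by a Reynolds transport computation, and the boundary flux terms across each bisector cancel pairwise because the two integrands $\| x - y_i \|^2$ and $\| x - y_j \|^2$ agree on the shared bisector $\partial \Vor(y_i) \cap \partial \Vor(y_j)$. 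Establishing $C^1$ (and $C^2$ almost everywhere) then reduces to checking that $m_i$ and $g_i$ vary continuously with $Y$, which follows from the continuous dependence of the Voronoi cells on the sites away from degenerate configurations.
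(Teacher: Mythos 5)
Your proposal is correct and follows essentially the same route as the paper: both define the family $Q_T(Y)$ over arbitrary assignments $T:\Omega\rightarrow Y$, observe that $Q$ is the lower envelope of this family with the Voronoi assignment as minimizer, and then apply the envelope theorem to differentiate with the cells frozen, yielding $\nabla_{y_i} Q(Y) = 2 m_i (y_i - g_i)$. Your added remarks on justifying the envelope step (general choice sets, boundary-term cancellation via Reynolds transport) match the paper's own surrounding discussion and strengthen rather than diverge from it.
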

\begin{proof}
  Consider the function $Q_T(Y) := \int_\Omega \| x - T(x) \|^2 d\mu$, parameterized by an 
  assignment $T: \Omega \rightarrow Y$. We are in a setting similar to semi-discrete
  optimal transport (Section \ref{sect:semidiscreteOT}), except that the function $Q_T(Y)$
  is quadratic (see Figure \ref{fig:LowerEnveloppeCVT}), whereas $F_T(W)$ is linear (Figure \ref{fig:LowerEnveloppe}).
   We have~:
\begin{itemize}
  \item $Q(Y) = Q_{T_{\Vor}}(Y)$;
  \item for a given $Y$, $T_{\Vor}$ is the unique affectation that minimizes $Q_T(Y)$.
\end{itemize}
By the envelope theorem, we have:
$$
\begin{array}{lcl}
   \nabla Q(Y) & = &  \nabla Q_{T_{\Vor}}(Y)) = \nabla \sum_i \int\limits_{\Vor(y_i)} (x^2 - 2x \cdot y_i + y_i^2) d\mu   \\[4mm]
               & = & \sum_i \left( \nabla \int\limits_{\Vor(y_i)}x^2 d\mu
                    - 2 \nabla \int\limits_{\Vor(y_i)} x \cdot y_i d\mu + \nabla \int\limits_{\Vor(y_i)} y_i^2 d\mu \right) \\[6mm]
   \nabla_{y_i}Q(y) & = & -2 y_i \int\limits_{\Vor(y_i)} x d\mu + 2 y_i \int\limits_{\Vor(y_i)} d\mu \\[4mm]
                   & = & -2 m_i g_i + 2 m_i y_i = 2 m_i (y_i - g_i)
\end{array}
$$
\end{proof}
This directly gives the expression of the gradient of $Q$ and explains why most of the terms cancel out in the derivations conducted
in \cite{Iri1984}. I mention that the same result can be obtained in a more general setting with Reynold's transport theorem \cite{nivoliers:AFM:2013}
(that deals with functions integrated over varying domains).

However, the envelope argument cannot be used to compute the Hessian of $Q$ (second order derivatives), and the 
structure of the formulas \cite{Iri1984,Du1999,liu:onCVT:09} do not suggest that direct computation can be avoided for them. Note
also that $Q$ is the lower envelope of a family of parabola (instead of a family of hyperplanes), therefore the concavity argument
does not hold, and the graph of $Q$ has many local minima (as depicted in Figure \ref{fig:LowerEnveloppeCVT}). The local minima of $Q$,
i.e. the point sets $Y$ such that $\nabla Q = 0$, satisfy $\forall i, y_i = g_i$, in other words, the position at each point $y_i$ corresponds
to the centroids of the Voronoi cell associated with $y_i$. For this reason, a stationary point of $Q$ is called a 
\emph{centroidal Voronoi tessellations}. To compute a centroidal Voronoi tessellation, it is possible to iteratively move each point
towards the centroid of its Voronoi cell (Lloyd relaxation \cite{Lloyd82leastsquares}), which is equivalent to minimizing $Q$ with
a gradient descent method \cite{Du:1999:CVT:340312.340319}. It is also possible to minimize $Q$ with Newton-type methods \cite{liu:onCVT:09} 
that show faster convergence. \\

More relations between semi-discrete optimal transport and vector quantization can be exhibited by considering a power diagram as the 
intersection between a $d+1$ Voronoi diagram and $\mathR^d$~:

\begin{observation}
  The $d$-dimensional power diagram $\Pow_W(Y)$ corresponds to the intersection between the $d+1$ dimensional Voronoi diagram 
$\Vor(\hat{Y})$ and $\mathR^d$, where the $\mathR^{d+1}$ lifting $\hat{y_i}$ of $y_i$ is defined by~:
\begin{small}
$$
  \hat{y_i} = \left(
\begin{array}{c}
  y_{i,1} \\
  y_{i,2} \\
\vdots \\
  y_{i,d} \\[2mm]
  h_i = \sqrt{w_M - w_i}
\end{array}  
  \right)
$$
\end{small}
where $y_{i,j}$ denotes the $j$-th coordinate of point $y_i$, and where $w_M$ denotes the maximum of all weights $\mbox{Max}(w_i)$.
\end{observation}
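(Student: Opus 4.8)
The plan is to verify the claim by a direct algebraic comparison of the two cell-membership conditions, after identifying $\mathbb{R}^d$ with the horizontal hyperplane $\{x_{d+1}=0\}$ inside $\mathbb{R}^{d+1}$. Concretely, I would associate to each $x \in \mathbb{R}^d$ its lift $\hat{x} = (x_1,\dots,x_d,0)$, and then show that $\hat{x} \in \Vor(\hat{y_i})$ holds if and only if $x \in \Pow_W(y_i)$. Since these equivalences are established cell by cell, proving them for every index $i$ immediately yields that the trace of the $(d+1)$-dimensional Voronoi diagram on $\mathbb{R}^d$ coincides with the power diagram.

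The key step is the computation of the lifted squared distance. Because $\hat{x}$ and $\hat{y_i}$ differ only in their first $d$ coordinates (which contribute $x - y_i$) and in the last coordinate (which contributes $-h_i$), the Pythagorean decomposition gives
$$\|\hat{x} - \hat{y_i}\|^2 = \|x - y_i\|^2 + h_i^2 = \|x - y_i\|^2 + (w_M - w_i),$$
where the final equality uses the definition $h_i = \sqrt{w_M - w_i}$. I would then write the Voronoi membership condition $\|\hat{x}-\hat{y_i}\|^2 < \|\hat{x}-\hat{y_j}\|^2$ for all $j \neq i$, substitute this identity on both sides, and observe that the additive constant $w_M$ cancels. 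What remains is exactly $\|x-y_i\|^2 - w_i < \|x-y_j\|^2 - w_j$ for all $j \neq i$, which is the definition of $\Pow_W(y_i)$. Every step in this chain is an equivalence, so the two conditions are genuinely the same.

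The only point requiring care — and the reason the particular shift $w_M = \mbox{Max}(w_i)$ appears — is the well-definedness of the lifting: the height $h_i$ must be a real number, which forces $w_M - w_i \geq 0$, and this holds precisely when $w_M$ dominates every weight. I would emphasize that the argument uses no special property of the maximum beyond $w_M \geq \max_i w_i$: since $w_M$ cancels in the comparison, any sufficiently large constant defines the same diagram, so the construction is intrinsic and the choice of $w_M$ is merely a convenient normalization that keeps all lifts real. There is essentially no analytic obstacle here; the entire content is the single distance identity, and the only thing to state with care is the identification of $\mathbb{R}^d$ with the zero-height hyperplane together with the bookkeeping of the cancellation.
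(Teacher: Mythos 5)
Your proposal is correct and is essentially identical to the paper's proof: both identify $x$ with its zero-height lift $\hat{x}=(x,0)$, expand $\|\hat{x}-\hat{y}_i\|^2 = \|x-y_i\|^2 + (w_M - w_i)$ via the Pythagorean decomposition, and cancel the common constant $w_M$ to recover the power-cell inequality. Your added remark that $w_M$ serves only to keep the heights $\sqrt{w_M - w_i}$ real, and that any constant dominating the weights would do, is a small clarification the paper leaves implicit but does not change the argument.
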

\begin{proof}
$$
\begin{array}{l}
   \Vor(\hat{y}_i) \cap \mathR^d  =  \{ x \quad | \quad \| \hat{x} - \hat{y}_i \|^2 < \| \hat{x} - \hat{y}_j \|^2 \ \forall j \neq i \} \\[2mm]
      \begin{array}{lcl}
                                & = &
             {\small   \left\{ x \quad | \quad 
                \left\| \left[\begin{array}{c} x \\[1mm] 0 \end{array}\right] - 
                        \left[\begin{array}{c} y_i \\[1mm] \sqrt{w_M - w_i} \end{array}\right] 
                \right\|^2  < 

                \left\| \left[\begin{array}{c} x \\[1mm] 0 \end{array}\right] - 
                        \left[\begin{array}{c} y_j \\[1mm] \sqrt{w_M - w_j} \end{array}\right] 
                \right\|^2 \ \forall j \neq i
               \right\} }  \\[6mm]
    & = & \{ x \quad | \quad \| x - y_i \|^2 - w_i + w_M < \| x - y_j \|^2 - w_j + w_M \ \forall j \neq i \} \\[2mm]
    & = & \{ x \quad | \quad \| x - y_i \|^2 - w_i < \| x - y_j \|^2 - w_j \ \forall j \neq i \} \\[2mm]
    & = & \Pow_W(y_i)
    \end{array}
\end{array}
$$
\end{proof}

We can now see a relation between vector quantization and semi-discrete optimal transport~:
\begin{observation}
   The quantization noise power $\hat{Q}(\hat{Y})$ computed in $\mathR^{d+1}$ corresponds to the 
   term $f_{T_W}(W)$ of the function maximized by the weight vector that defines a semi-discrete optimal 
   transport map plus the constant $w_M \mu(\Omega)$.
\end{observation}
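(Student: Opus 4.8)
The plan is to compute $\hat{Q}(\hat{Y})$ directly from its definition and to recover the claimed identity $\hat{Q}(\hat{Y}) = f_{T_W}(W) + w_M \mu(\Omega)$ by substituting the lifting of the previous observation and recognizing that the pointwise minimizer in $\mathR^{d+1}$ is exactly the power-diagram assignment $T_W$. First I would write the quantization noise power in $\mathR^{d+1}$, using that a point $x \in \Omega$ lifts to $\hat{x} = (x,0)$ (its last coordinate vanishes since $\Omega \subset \mathR^d$):
$$
\hat{Q}(\hat{Y}) = \int_\Omega \min_i \| \hat{x} - \hat{y}_i \|^2 \, d\mu .
$$
The key computational step has already been carried out in the previous observation, namely the identity $\| \hat{x} - \hat{y}_i \|^2 = \| x - y_i \|^2 - w_i + w_M$. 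Substituting this inside the minimum, the additive term $w_M$ pulls out of the $\min_i$, giving
$$
\min_i \| \hat{x} - \hat{y}_i \|^2 = \min_i \left( \| x - y_i \|^2 - w_i \right) + w_M .
$$

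Next I would observe that the index $i$ attaining this minimum is, by definition of the power cells, the index for which $x \in \Pow_W(y_i)$, i.e. the index $T_W(x)$ selected by the power-diagram assignment (with ties only on the measure-zero cell boundaries, which do not affect the integral). Hence the pointwise minimum equals $\| x - T_W(x) \|^2 - w_{T_W(x)} + w_M$, and integrating against $\mu$ yields
$$
\hat{Q}(\hat{Y}) = \int_\Omega \left( \| x - T_W(x) \|^2 - w_{T_W(x)} \right) d\mu + w_M \int_\Omega d\mu .
$$
The first integral is precisely $f_{T_W}(W)$ as defined in the proof of Theorem \ref{thm:SemiDiscrete}, with the arbitrary assignment $T$ taken to be $T_W$, and the second integral is $w_M \mu(\Omega)$, which establishes the claim.

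There is essentially no hard step here: the result is a one-line consequence of the lifting identity together with the fact that the $\min_i$ defining the $(d{+}1)$-dimensional Voronoi assignment restricts on $\mathR^d$ to the power-diagram assignment $T_W$. The only point deserving care is the bookkeeping of the lifting itself — confirming that $\hat{x}$ has zero last coordinate so that the identity of the previous observation applies verbatim, and that the constant $w_M$ is genuinely independent of both $x$ and $i$, so that it may legitimately be extracted from the minimum and then integrated to the single constant $w_M \mu(\Omega)$.
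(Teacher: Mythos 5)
Your proof is correct and follows essentially the same route as the paper: both rest on the lifting identity $\| \hat{x} - \hat{y}_i \|^2 = \| x - y_i \|^2 - w_i + w_M$ from the preceding observation, together with the fact that the $(d{+}1)$-dimensional Voronoi structure restricted to $\mathR^d$ is the power diagram, so that the integral reduces to $f_{T_W}(W) + w_M\mu(\Omega)$. The only cosmetic difference is that you start from the pointwise-minimum form of $\hat{Q}$ and identify the argmin with $T_W$, while the paper starts from the equivalent sum-over-cells form and substitutes cell by cell; these two forms are equated in the paper's own definition of the quantization noise power.
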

\begin{proof}
$$
\begin{array}{lcl}
  \hat{Q}(\hat{Y}) & = & \sum\limits_i \int\limits_{\Vor(\hat{y}_i) \cap {\mathR^d}} \| \hat{x} - \hat{y}_i \|^2 d\mu \\[8mm]
                   & = & \sum\limits_i \int\limits_{\Pow_W(y_i)} \| x - y_i \|^2 - w_i + w_Md\mu  \\[4mm]
                   & = & f_{T_W}(W) + w_M \mu(\Omega)
\end{array}
$$
\end{proof}
The quantization noise power $Q$ is already known to be of class $C^2$ almost everywhere\footnote{
 ``by almost everywhere'', we mean that the function is no longer $C^2$ whenever two points become co-located, or
 whenever a Voronoi bisector matches a discontinuity of $\mu$ located on a straight line.} \cite{liu:onCVT:09}.
 As a consequence of this observation, since the function $f_{T_W}(W)$ can be obtained through the change of variable $h_i = \sqrt{w_M-w_i}$,
it is also of class $C^2$ almost everywhere. This gives more justification for using a quasi-Newton method to find the maximum
of $g$ as done in \cite{DBLP:journals/cgf/Merigot11} and in this paper (but note that a complete justification would require to find 
some bounds on the eigenvalue of the Hessian).

Another consequence of this observation is that given $\Omega \subset \mathR^d$, a measure $\mu$ and a pointset $Y$, optimizing $\hat{Q}$ for the first
$d$ coordinates moves the points in a way that minimizes the quantization noise power, and optimizing for the $d+1$ coordinate
computes the weights of a power diagram that defines an assignment that transports $\mu$ to the points. Interestingly, the       
first problem has multiple local minima, whereas the second one admits a global maximum.


\section{Numerical Algorithm}
\label{sect:numerics}

I shall now explain how to use the results in Section \ref{sect:semidiscreteOT} and turn them into 
an efficient numerical algorithm. The algorithm is a variation of the one in \cite{DBLP:journals/cgf/Merigot11}.
Besides generalizing it to the 3d case, I make some observations that improve the efficiency of the multilevel 
optimization method. \\

The input of the algorithm is a measure $\mu$, represented by a simplicial complex $M$ (i.e. an interconnected set of tetrahedra in 3D),
a set $Y$ of $k$ points $y_i$ and $k$ masses $\nu_i$ such that $\sum \nu_i = \mu(M)$ where $\mu(.)$ is defined as follows~:
For a set $B \subset \mathR^3$, the measure $\mu(B)$ corresponds to the volume of the intersection between the tetrahedra of $M$ and
$B$. Optionally, $M$ can have a density linearly interpolated from its vertices. In this setting, the measure of $B$ corresponds to the integral
of the linearly interpolated density on the intersection between $B$ and the tetrahedra of $M$.

The weight vector that realizes the optimal transport can be obtained by maximizing the function $g(W)$ using different numerical
methods. The single-level version of the algorithm in \cite{DBLP:journals/cgf/Merigot11} is outlined in Algorithm \ref{alg:SingleLevel}~:

  \begin{algorithm}
    \SetAlgoLined
  \KwData{A tetrahedral mesh $M$, a set of points $Y$ and masses $\nu_i$ such that $\mu(M) = \sum\nu_i$}
  \KwResult{The weight vector $W$ that determines the optimal transport map $T$ from $M$ to $\sum \nu_i \delta_{y_i}$}
    \BlankLine
    $W \leftarrow 0$\\
    (1) \While{ $\| \nabla g(W) \|^2 < \epsilon\ $}{ 
    (2)  Compute $\Vor_W(Y) \cap M$ \\[2mm]
    (3)  Compute $g(W) = \sum_i \int\limits_{\Pow_W(y_i) \cap M} \| x - y_i \|^2 - w_i d\mu + \sum_i \nu_i w_i$ \\[2mm]
    (4)  Compute $\nabla g(W) = -\mu(\Pow_W(y_i)) + \nu_i$ \\
    (5)  update $W$ with L-BFGS 
    } 
    \BlankLine
    \caption{Semi-discrete optimal transport (single-level algorithm)}
    \label{alg:SingleLevel}
  \end{algorithm}

To facilitate reproducing the results, I give more details about each
step of the algorithm: (1): note that the components of the gradient
of $g$ correspond to the difference between the prescribed measures
$\nu$ and the measures of the power cells. This gives an
interpretation of the norm of the gradient of $g$, and helps
choosing a reasonable $\epsilon$ threshold. In the experiments below, I
used $\epsilon =  0.01 * \mu(M) / \sqrt{k}$. (2): the algorithm that computes the
intersection between a power diagram and a tetrahedral mesh is detailed further (Algorithm \ref{alg:VoroClip}). 
(3),(4): once the intersection $\Vor_W(Y) \cap M$
is computed, the terms $g(W)$ and $\nabla g(W)$ are obtained by summing the 
contributions of each intersection (grayed area in Figure \ref{fig:VoroClip}).
(5): To maximize $g$, as in \cite{DBLP:journals/cgf/Merigot11}, I use the L-BFGS numerical 
optimization method \cite{Liu:1989:LMB:81100.83726}. An implementation is available in \cite{LBFGSImpl}.

\subsection{Computing the intersection between a tetrahedral mesh and a power diagram}
\label{sect:RVD}

To adapt the 2d algorithm in \cite{DBLP:journals/cgf/Merigot11} to the 3d case, the only required component
is a method that computes the intersection between a tetrahedral mesh and a power diagram (step (2) 
in Algorithm \ref{alg:SingleLevel})~:

  \begin{algorithm}
    \SetAlgoLined
  \KwData{A tetrahedral mesh $M$, a set of points $Y$ and a weight vector $W$}
  \KwResult{The intersection $\Vor_W(Y) \cap M$}
    \BlankLine
    S: Stack(couple(tet index, point index)) \\
    \ForEach{ tetrahedron $t \in M$}{
      \If{ $t$ \emph{is not marked} }{
          (1) $i \leftarrow i \ |\ \Pow_W(y_i) \cap t \neq \emptyset$ \\
         Mark(t,i) \\
          Push(S, (t,i)) \\
          \While{\emph{S is not empty}}{
              (t,i) $\leftarrow$ Pop(S) \\
              (2) P: Convex $\leftarrow \Pow_W(y_i) \cap t$ \\
              (3) Accumulate(P) \\
              (4) \ForEach{j \emph{neighbor of} i \emph{in P}}{
                  \If{\emph{$(t,j)$ is not marked}}{
                      Mark($t,j$); \quad
                      Push(S, ($t,j$)) 
                  }
              }
              (5) \ForEach{$t^\prime$ \emph{neighbor of} $t$ \emph{in P}} {
                  \If{\emph{$(t^\prime,i)$ is not marked}}{
                      Mark($t^\prime,i$); \quad
                      Push(S, ($t^\prime,i$)) 
                  }
              }
          }
       }
    }
    \BlankLine
    \caption{Computing $\Pow_W(Y) \cap M$ by propagation}
    \label{alg:VoroClip}
  \end{algorithm}

The algorithm works by propagating simultaneously over the tetrahedra and the power cells. It traverses all
the couples $(t,i)$  such that the tetrahedron $t$ has a non-empty intersection with the power cell of $y_i$.
(1): Propagation is initialized by starting from
an arbitrary tetrahedron $t$ and a point $y_i$ that has a non-empty intersection between its power cell and $t$. 
I use the point $y_i$ that minimizes its power distance $\| y_i - . \|^2 - w_i$ to one of the vertices of $t$.
(2): a tetrahedron $t$ and a power cell $\Pow_W{y_i}$ can be both described as the intersection of half-spaces,
as well as the intersection $t \cap \Pow_W{y_i}$, computed using re-entrant clipping (each half-space is removed
iteratively). I use two version of the algorithm, a non-robust one that uses floating point arithmetics, and
a robust one \cite{PCK}, that uses arithmetic filters \cite{meyer:inria-00344297}, 
expansion arithmetics \cite{DBLP:conf/compgeom/Shewchuk96} and symbolic perturbation \cite{Edelsbrunner90simulationof}.
Both predicates and power diagram construction algorithm are available in PCK (Predicate
  Construction Kit) part of my publically available ``geogram''
programming library\footnote{\url{http://gforge.inria.fr/projects/geogram/}}. (3) the contribution of each
intersection $P = t \cap \Pow_W{y_i}$ is added to $g$ and $\nabla g$. 

The convex $P$ is illustrated in the (2d) figure \ref{fig:VoroClip}
as the grayed area (in 3d, $P$ is a convex polyhedron). The algorithm then propagates to both neighboring tetrahedra
and points. (4): each portion of a facet of $t$ that remains in $P$ triggers a 
propagation to a neighboring tetrahedron $t^\prime$. In the 2d example of Figure \ref{fig:VoroClip}, this corresponds to
edges $e_1$ and $e_4$ that trigger a propagation to triangles $t_2$ and $t_1$ respectively. (5): each facet of $P$ 
generated by a power cell facet triggers propagation to a neighboring point. In the 2d example of the figure, this
corresponds to edges $e_2$ and $e_3$ that trigger propagation to points $y_{j_1}$ and $y_{j_2}$ respectively. 

This algorithm is parallelized, by partitioning the mesh $M$ into $M_1$, $M_2$, \ldots $M_{nb\_cores}$ and by computing
in each thread $M_{thrd} \cap \Pow_W(Y)$. 

\begin{figure}
\includegraphics[width=100mm]{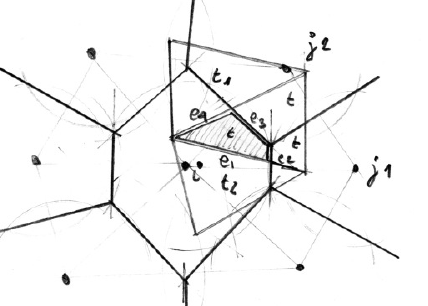}
\caption{Computing the intersection between a power diagram and a tetrahedral mesh by propagation.} 
\label{fig:VoroClip}
\end{figure}

\begin{table}
\begin{tabular}{l|lllllll}
   \hline
   \hline
   nb masses $k$    & 1000 & 2000 & 5000  & 10000 & 30000 & 50000 & 100000 \\
   nb iter          & 146  & 200  & 328   &  529  &  1240 & 1103  & 1102   \\
   time (s)         & 2.8  & 6.4  & 21    &  65   &  232  & 568   & 847   \\
   \hline 
\end{tabular}
\caption{Statistics for a simple translation scenario with the single-level algorithm.
 The threshold for $\| \nabla g \|^2$ is set to $\epsilon =  0.01 * \mu(M) / \sqrt{k}$.
}
\label{tab:SingleLevel}
\end{table}

I conducted a simple experiment, where $M$ is a tessellated sphere with 2026 tetrahedra, and $Y$ a sampling of the same sphere
shifted by a translation vector of three times the radius of the sphere. The statistics in Table \ref{tab:SingleLevel} obtained
with a standard PC\footnote{
experiments done with a 2.8 GHz Intel Core i7-4900MQ CPU with an implementation of Algorithm \ref{alg:VoroClip} that uses 8 threads.}
show that the single-level algorithm does not scale-up well with the number of points and starts taking a signiffficant time
for processing 10K masses and above. This confirms the observation in \cite{DBLP:journals/cgf/Merigot11}.
This is because at the initial iteration, all the weights are zero, and the
power diagram corresponds to the Voronoi diagram of the points $y_i$. At this step, only some points $y_i$ on the border
of the pointset have a Voronoi cell that ``see'' the mesh $M$ (i.e. that have a non-empty intersection with it). It takes
many iteration to compute the weights that ``shift'' the concerned power cells onto $M$ and allow inner points to see $M$.
It is only once all the points of $Y$ ``see'' $M$ that the numerical method can capture the trend of $g$ around the maximum
(and then it takes a small number of iterations to the algorithm to balance the weights).
Intuitively, $Y$ is ``peeled'' only one layer of points at a time. The bad effect on performances is even more important
than in \cite{DBLP:journals/cgf/Merigot11}, because in the 3d setting, the proportion of ``inner'' points relative
to the number of points on the border of the pointset is larger than in 2d. 

\subsection{Multi-level algorithm}

To improve performances, I follow the approach in \cite{DBLP:journals/cgf/Merigot11}, that uses a multilevel algorithm.
The idea consists in ``bootstrapping'' the algorithm on a coarse sub-sampling of the pointset. The ``peeling'' effect
mentioned in the previous paragraph is limited since we have a small number of points. Then the algorithm is run with
a larger number of points, using the previously computed weights as an initialization. The set of points can be
decomposed into multiple level of increasing resolution.  The complete algorithm will be detailed below (Algorithm \ref{alg:MultiLevel}).

\begin{table}
\begin{tabular}{l|lllllll}
   \hline
   \hline
   nb masses                & 1000 & 2000 & 5000  & 10000 & 30000  & 50000 & 100000 \\
    deg. 0 time (s)         & 2.5  & 6    & 19    & 38    & 184    & 356   & 959    \\
    deg. 1 time (s)         & 1    & 2    &  6    & 14    & 54     & 103    & 172    \\
    deg. 2 time (s)         & 1.4  & 2.2  &  6    & 16    & 58     & 138    & 172    \\
    BRIO/deg. 2 time (s)    & 1    & 1.65 &  3.4  & 9     & 26     & 62    & 106    \\
   \hline
   single level time (s)    & 2.8  & 6.4  & 21    &  65   &  232  & 568   & 847   \\
   \hline 
\end{tabular}
\caption{Statistics for a simple translation scenario with the multi-level algorithm. The mesh $M$ has 61233 tetrahedra. 
   Timings are in seconds. Each level is initialized from the previous one with regressions of different degrees.}
\label{tab:MultiLevel}
\end{table}

To further improve the speed of convergence, I use the remark in Section \ref{sect:SemiDiscKanto} that the weights
$w_i$ corresponds to the potential $\phi$ evaluated at $y_i$ (with a 1/2 factor). For a translation, 
we know that $T^{-1}(y) = y - V = y - \nabla \phi$, therefore $\phi(y) = V \cdot y$ where $V$ denotes the translation vector.
In more general settings, $\phi$ is still likely to be quite regular (except on its singularities where $T$
is discontinuous). When initializing a level from the previous one, this suggests initializing the new $w_i$'s 
from a regression of their nearest neighbors computed at the previous level. Table \ref{tab:MultiLevel}
shows the statistics for initialization with the nearest neighbor (deg. 0), linear regression with 10 nearest
neighbors (deg. 1) and quadratic regression with 20 nearest neighbors (deg. 2). As can be seen, initializing
with linear regression results in a significant speedup. In this specific case though, quadratic regression
does not gain anything. It is not a big surprise since we know already that $\phi(y) = V \cdot y$ is linear 
in this specific case, but it can slightly improve performances in more general settings, as shown further. 
Finally, it is possible to gain another x2 speedup factor~: the algorithm that
we use to compute the power diagrams \cite{Amenta:2003:ICC:777792.777824} sorts the points with a multilevel 
spatial reordering method, that makes it very efficient. It is possible to use the same multilevel spatial ordering
for both the numerical optimization and for computing the power diagrams  (BRIO/deg. 2 row in the table).
Since only the weights change during the iterations, this order needs to be computed once only, at the beginning 
of the algorithm. Note the overall 8x acceleration factor as compared to the single-level algorithm in Table \ref{tab:SingleLevel}
(repeated in the last row of Table \ref{tab:MultiLevel} to ease comparison).
The complete multi-level algorithm is summarized below~:

\begin{algorithm}
    \SetAlgoLined
  \KwData{A tetrahedral mesh $M$, a set of points $Y$ and masses $\nu_i$ such that $\mu(M) = \sum\nu_i$}
  \KwResult{The weight vector $W$ that determines the optimal transport map $T$ from $M$ to $\sum \nu_i \delta_{y_i}$}
    \BlankLine
    Apply a random permutation to the points $Y$ \\
    (1) Partition the interval of indices $[1,k]$ of $Y$ into $n_l$ intervals $[b_l, e_l]$ of increasing size \\
    \ForEach{\emph{level} $l$}{
        (2) Sort the points $y_{b_l} \ldots Y_{e_l}$ spatially \\
        (3) {\bf For each } $i$, $\nu_i \leftarrow |M|/e_l$ \\
        (4) Interpolate the weights $w_{b_l} \ldots w_{e_l}$ from the already computed weights $w_1 \dots w_{b_l - 1}$\\
        Optimize the weights using Algorithm \ref{alg:SingleLevel}
    }
    \BlankLine
    \caption{Semi-discrete optimal transport (multi-level algorithm)}
    \label{alg:MultiLevel}
\end{algorithm}

In my implementation, for step (1), the ratio between the number of points in a level and in the rest of the points
is set to 0.125. For the spatial sort in step (2), the algorithm, available in ``geogram'', was inspired by
the variant of the Hilbert sort implemented in \cite{SpatialSort}. (3): Before computing the optimal transport maps,
since the number of points changes at each level, the masses of the points need to be updated. 
At step (4), to determine the weight of 
a new point $w_i$, I use linear least squares with 10 nearest neighbors for degree 1 and 
quadratic least squares with 20 nearest neighbors for degree 2.

The influence of the degree of the regression is evaluated in Table \ref{tab:MultiLevel2} for
a configuration where a sphere is splitted into two spheres (first row in Figure \ref{fig:result1}). Unlike in the
previous translation case, in this configuration the potential $\phi$ is non-linear (see the deformations of the
spheres), and a higher degree regression slightly improves the speed of convergence for a large number of
points, since it captures more variations of $\phi$ and better initializes $W$.

\begin{table}
\begin{tabular}{l|lllllll}
   \hline
   \hline
   nb masses                & 1000 & 2000 & 5000  & 10000 & 30000 & 50000  & 100000 \\
    BRIO/deg. 1 time (s) & 1    & 1.7  &  3.5  & 9.8   & 25    &  61.7  & 122  \\
    BRIO/deg. 2 time (s) & 0.9  & 1.6  &  3.5  & 8.4   & 28.3  &  61.4  & 112 \\
   \hline 
\end{tabular}
\caption{Statistics for splitting a sphere into two spheres with the multi-level algorithm. Timings are in seconds.
   Each level is initialized from the previous one with regressions of different degrees.}
\label{tab:MultiLevel2}
\end{table}

\subsection{Using semi-discrete transport to approximate the transport between two tetrahedral meshes}

I now consider the case where the input is a pair of tetrahedral meshes $M$ and $M^\prime$. The goal
is now to generate a sequence of tetrahedral meshes that realize an approximation of the optimal
transport between $M$ and $M^\prime$. The algorithm is outlined below~:

\begin{algorithm}
    \SetAlgoLined
  \KwData{Two tetrahedral meshes $M$ and $M^\prime$, and $k$ the desired number of vertices in the result}
  \KwResult{A tetrahedral mesh $G$ with $k$ vertices and a pair of points $p_i^0$  and $p_i^1$ attached to each vertex. Transport
            is parameterized by time $t \in [0,1]$ with $p_i(t) = (1-t)p_i^0 + t p_i^1$.}
    \BlankLine
    (1) Sample $M^\prime$ with a set $Y$ of $k$ points  \\
    (2) Compute the weight vector $W$ that realizes the optimal transport between $M$ and $Y$ (Algorithm \ref{alg:MultiLevel})\\
    (3) Compute $E = \Del(Y)|M^\prime$ and $F = \Pow_W(Y)|M$ \quad ; \quad Tets(G) $\leftarrow E \cap F$ \\
    (4) \textbf{Foreach} $i \in [1\ldots k]$, $(p_i)^0 \leftarrow \mbox{centroid}(\Pow_W(y_i) \cap M) \quad ; \quad (p_i)^1 \leftarrow y_i$ \\
    \BlankLine
    \caption{Approximated optimal transport between two tetrahedral meshes}
    \label{alg:ApproxTransport}
\end{algorithm}

The different steps of this algorithm are implemented as follows: (1): to compute a homogeneous sampling, 
I initialize $Y$ with a centroidal Voronoi tessellation (see Section \ref{sect:CVTandOT}).
(3): the main difficulty consists in finding the discontinuities in $T$ and avoid generating tetrahedra that cross them. 
To detect the discontinuities in $T$, I consider that the Voronoi diagram $\Vor(Y)$ that samples $M^\prime$ evolves towards 
the power diagram $\Pow_W(Y)$ that samples $M$ (note that this evolution goes backwards, from $M^\prime$ to $M$). Thus, the
tetrahedra that are kept are those that are present both in the dual $\Del(Y)$ of $\Vor(Y)$ (Delaunay triangulation) 
and the dual $\Reg_W(Y)$ of $\Pow_W(Y)$ (regular weighted triangulation). (4) Finally, the geometry $p_i^0$ of each vertex 
of $G$ at initial time $t=0$ is determined as the centroid of the power cell $\Pow_W(y_i) \cap M$. The geometry $p_i^1$ 
at final time $t=1$ is simply $y_i$.


\section{Results and conclusions}
\label{sect:results}

\begin{figure}
  \includegraphics[width=\textwidth]{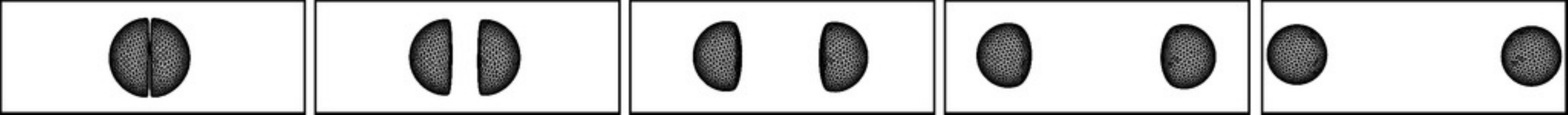}
  \includegraphics[width=\textwidth]{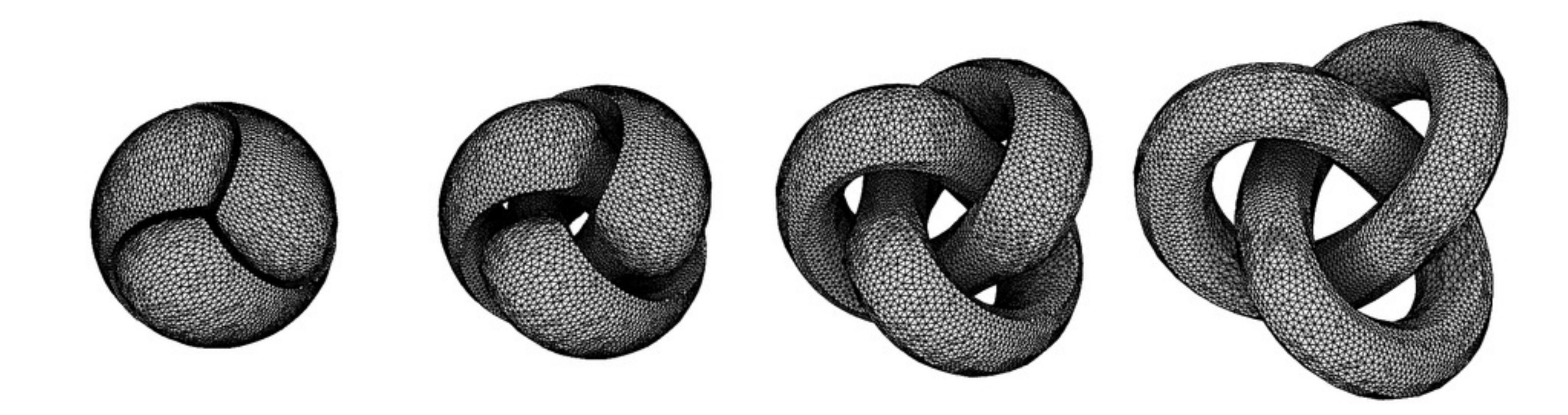}
  \includegraphics[width=\textwidth]{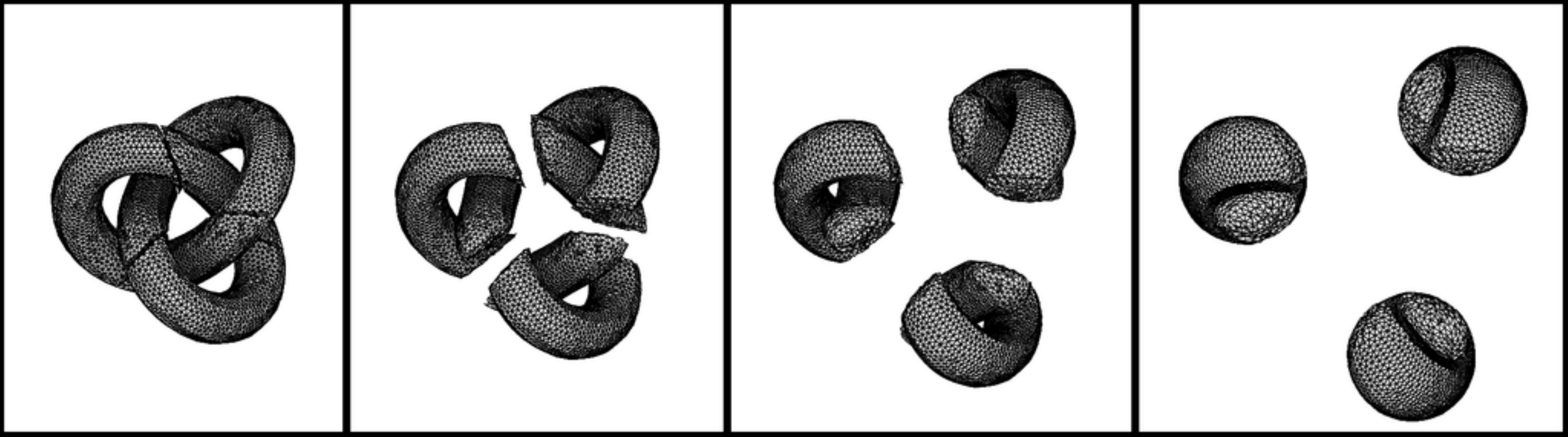}
\caption{Some examples of semi-discrete optimal transport with topology changes.}
\label{fig:result1}
\end{figure}

\begin{figure}
  \includegraphics[width=\textwidth]{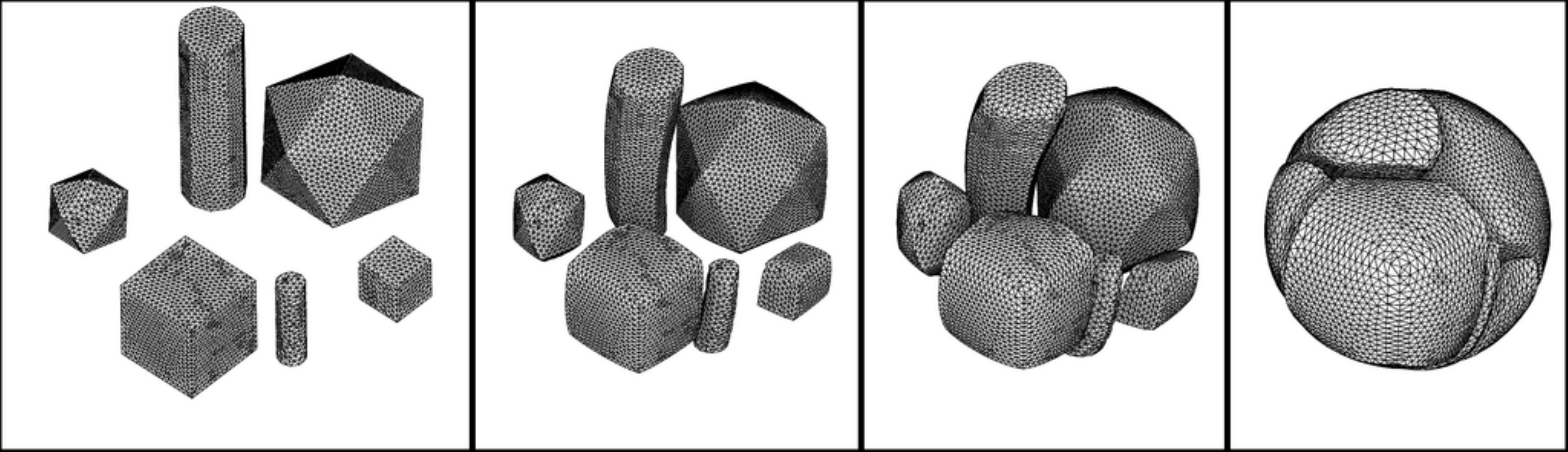}
  \includegraphics[width=\textwidth]{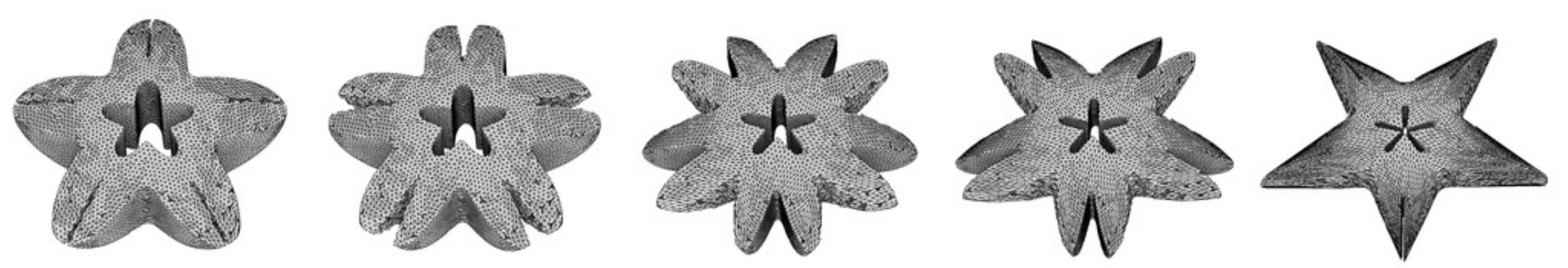}
  \includegraphics[width=\textwidth]{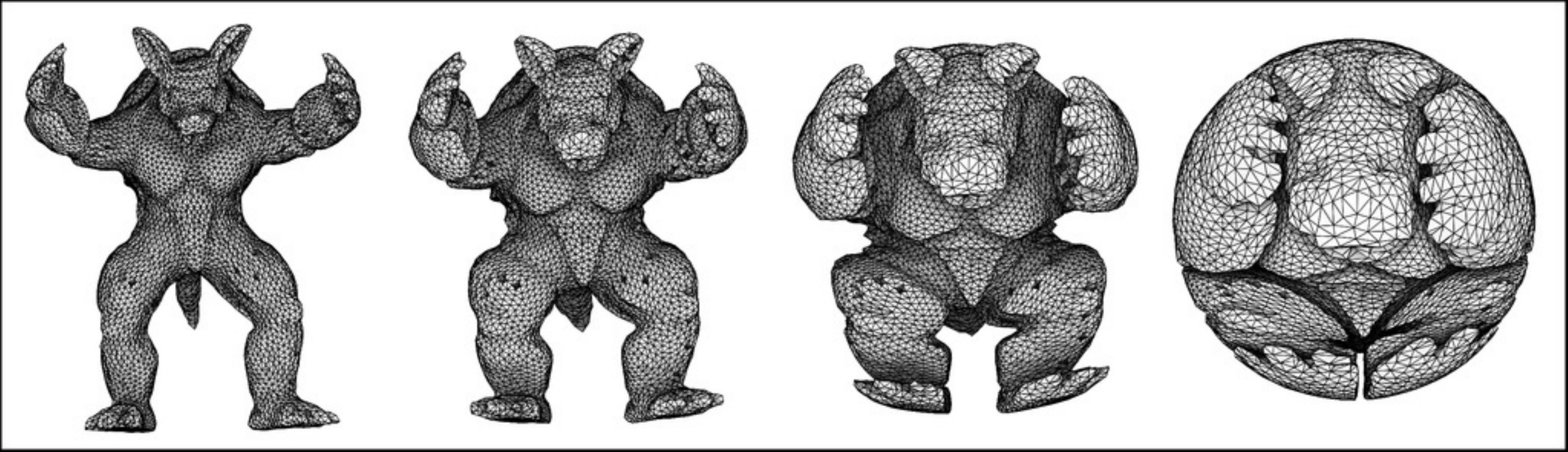}
\caption{More examples of semi-discrete optimal transport. Note how the solids deform
and merge to form the sphere on the first row, and how the branches of the star split
and merge on the second row. }
\label{fig:result2}
\end{figure}

\begin{table}
\begin{tabular}{l|llllllllll}
   \hline
   \hline
    nb masses   & 1000 & 2000 & 5000  & 10000 & 30000 & 50000  & $10^5$ & $3\times 10^5$ & $5\times 10^5$  & $10^6$ \\
    time (s)    & 1.45 & 3.2  & 7.3   & 17.3  & 55    & 154    &  187 &  671        & 1262          & 2649  \\
   \hline 
\end{tabular}
\caption{Statistics for the Armadillo $\rightarrow$ sphere optimal transport with varying number of masses (see third row 
   of Figure \ref{fig:result2}). Timings are given in seconds. The multi-level algorithm with BRIO pre-ordering and degree 2 regressions is used.}
\label{tab:Armadillo}
\end{table}

Several results are shown in Figures \ref{fig:result1} and
\ref{fig:result2}. Note that when the volume of $M$ and $M^\prime$ differ, using $\nu_i = |M|/k$
changes the ``density'' of $M^\prime$ and preserves the total mass. The intermediary steps 
are generated by using $p_i = (1-t) p_i^0 + t p_i^1$
for the locations at the vertices of $G$. As can be seen, the combinatorial criterion that 
selects the stable tetrahedra successfully finds the discontinuities. 
The third row of Figure \ref{fig:result2} demonstrates some potential applications in 
computer graphics. In the bottom row, the obtained deformation looks ``natural'' and ``visually pleasing''
(as far as I can judge, but my own judgment may be biased \ldots). However, a ``user'' would
probably prefer to rotate the star in the center column of Figure \ref{fig:result2}
rather than splitting and merging the branches, but optimal transport ``does not care'' about 
preserving topology.

Timings for the Armadillo $\rightarrow$ sphere optimal transport are given in Table \ref{tab:Armadillo}. The algorithm scales up reasonably
well, and computes the optimal transport from a tetrahedral mesh to 300K Dirac masses in 10 minutes. It scales-up to 1 million Dirac masses
(but it nearly takes 45 minutes).\\

To conclude, I mention that the main limitation of Algorithm \ref{alg:ApproxTransport} is that the discontinuities
are sampled at the precision of the initial sampling, that does not takes them into account. As a consequence, 
this leaves a gap that has a width of one
tetrahedron in the result. One can clearly see it in the figures. Moreover, when the shape undergoes strong deformations, flipping
may occur, making the concerned pairs of tetrahedra disappear in the result (for instance, one can observe 
some holes in the legs of the armadillo in Figure \ref{fig:result2}). With a better representation of discontinuity,
one may obtain a more precise representation of the transport. This leads to the following open questions,
that concern the continuous setting for some particular representations of $\mu$ and $\nu$~:

\begin{enumerate}
   \item  Given two tetrahedral meshes $M$ and $M^\prime$, is it possible to characterize the locus of the
  points where $T$ is discontinuous (\emph{discontinuity locus}), and invent an algorithm that generates 
  a faithful representation of it ?
   \item What does the discontinuity locus looks like if $M$ and $M^\prime$ both have a density linearly 
  interpolated over the tetrahedra ?
   \item What does the discontinuity locus looks like if $\mu$ and $\nu$ are supported by two different
  set of spheres ?
\end{enumerate}

\section*{acknowledgement}

I wish to thank Nicolas Bonneel for many discussions and for proofreading an early version of this article.


\bibliographystyle{siam}
\bibliography{transport.bib}
\end{document}